\newcolumntype{C}[1]{>{\centering}p{#1}}
\newtheorem{theorem}{Theorem}[section]
\newtheorem{lemma}[theorem]{Lemma}
\theoremstyle{definition}
\newtheorem{definition}[theorem]{Definition}
\newtheorem{example}[theorem]{Example}
\newtheorem{corollary}[theorem]{Corollary}
\newtheorem{remark}[theorem]{Remark}
\newtheorem{question}[theorem]{Question}
\numberwithin{equation}{section}
\begin{document}

%\title[short text for running head]{full title}
\title[A new perspective of arithmetic billiards]{A new perspective of arithmetic billiards}

\author{Yangcheng Li}
%Address of record for the research reported here
\address{School of Mathematical Sciences, South China Normal University, Guangzhou 510631, People's Republic of China}
\email{liyangchengmlx@163.com}

%\thanks will become a 1st page footnote.

%General info
\subjclass[2010]{11A05, 11D04, 20K01, 05A15.}
\date{}

\keywords{arithmetic billiards, finite discrete sets, linear Diophantine equations, Finite abelian groups, generating functions}

\begin{abstract}
We study the problem of arithmetic billiards from a new perspective. We first raise a similar problem about reflecting lights inside grids. For the solution to this problem, we will give three proofs. Next, we consider a similar problem in plane grids and give its solution. Moreover, we extend this two problems to $p$-dimensional space, where $p\geq2$. In this process, we introduce two mappings about finite discrete sets, and get two finite abelian groups. In addition, we give the definition of circular sequences and consider some combinatorial properties of circular sequences.
\end{abstract}

\maketitle

\section{Introduction}
In 1978, Rauch \cite{Rauch} studied the problem of how many light sources are needed to illuminate the interior of a bounded domain. He considered the problem from the perspective of geometry, using some simple geometric properties of the ellipse and an elementary compactness argument. In this paper, we investigate a similar problem in a plane grid. Our tools are methods in number theory and algebra. It is worth mentioning that Perucca, De Sousa, and Tronto \cite{Perucca} studied the problem of arithmetic billiards and got some interesting results.

Let us describe this problem first. A plane grid consists of several horizontal and vertical line segments (as shown in Figure 1).
\begin{figure}[H]
\begin{tikzpicture}[scale=0.9]
\draw[line width=0.6pt] (0,0)--(6,0)--(6,4)--(0,4)--(0,0);
\draw[line width=0.35pt] (0,1)--(6,1) (0,2)--(6,2) (0,3)--(6,3) (1,0)--(1,4) (2,0)--(2,4) (3,0)--(3,4) (4,0)--(4,4) (5,0)--(5,4);
\end{tikzpicture}
\vspace{0.2cm}
\caption{A $6\times4$ plane grid}
\end{figure}
\vspace{-0.6cm}
We consider the problem of how to connect all points in the plane grid with lines. We assume that the boundary of the plane grid is composed of mirrors, which means that light can be reflected. If a beam of light enters the plane grid from the boundary and reflects when it meets the boundary, then the beam of light will pass through several points in the plane grid. We want to know at least how many beams of light are needed to pass through all the points in this plane grid. Therefore, we raise a problem as follow.
\begin{question}
A $m\times n$ plane grid whose boundary is composed of mirrors, how many beams of light can pass through all points in the plane grid at least?
\end{question}

\begin{example}
The following is an example of a $6\times4$ plane grid, and this plane grid needs at least $3$ beams of light to pass through all points.
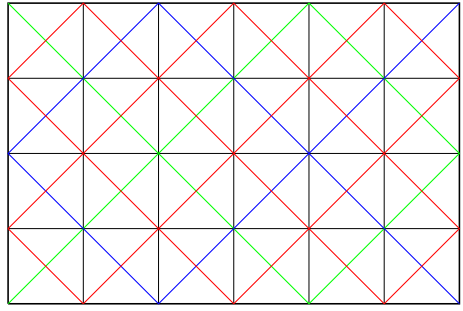
\begin{figure}[H]
\begin{tikzpicture}[scale=0.9]
\draw[line width=0.6pt] (0,0)--(6,0)--(6,4)--(0,4)--(0,0);
\draw[line width=0.3pt] (0,1)--(6,1) (0,2)--(6,2) (0,3)--(6,3) (1,0)--(1,4) (2,0)--(2,4) (3,0)--(3,4) (4,0)--(4,4) (5,0)--(5,4);
\draw[line width=0.4pt, color=green] (0,0)--(4,4)--(6,2)--(4,0)--(0,4); \draw[line width=0.4pt, color=blue] (6,0)--(2,4)--(0,2)--(2,0)--(6,4);
\draw[line width=0.4pt, color=red] (1,0)--(0,1)--(3,4)--(6,1)--(5,0)--(1,4)--(0,3)--(3,0)--(6,3)--(5,4)--(1,0);
\end{tikzpicture}
\vspace{0.2cm}
\caption{At least $3$ beams of light needed for a $6\times4$ plane grid}
\end{figure}
\end{example}
\vspace{-0.6cm}

Since a plane grid has four vertices, a beam of light that enters the plane grid from one vertex is bound to come out from the other vertex. Therefore, for any plane grid, at least two beams of light are needed, and we call the trajectories of these two beams an open path. Except that these two open paths are both called closed paths. Obviously, we only need to get the number of closed paths, and write it as $C(m,n)$. Example 1.2 shows that $C(6,4)=1$.

For any $m,n\in \mathbb{N}^{*}$, we have the following theorem.
\begin{theorem}
$C(m,n)=\gcd(m,n)-1$ holds for any $m,n\in \mathbb{N}^*$, where $\gcd(m,n)$ is the greatest common factor of $m$ and $n$.
\end{theorem}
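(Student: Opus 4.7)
My plan is to apply the standard unfolding of a $45^\circ$-billiard to a shift on the torus $G:=\mathbb{Z}/2m\times\mathbb{Z}/2n$ and extract the answer from a simple invariant. Encode a (point, direction)-state by a torus element via the folding map $\pi(a,b)=(\min(a,2m-a),\min(b,2n-b))$; under this encoding a single reflection step becomes the shift $T(a,b)=(a+1,b+1)$, and the direction-reversing involution $\sigma(a,b)=(-a,-b)$ satisfies $\pi\circ\sigma=\pi$. Consequently an unoriented beam (i.e.\ a trajectory with its reversal identified) is either a single $\sigma$-invariant $T$-orbit or a $\sigma$-pair of distinct $T$-orbits.

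Set $d=\gcd(m,n)$. Every $T$-orbit has common length $\operatorname{lcm}(2m,2n)=2mn/d$, so there are $|G|/(2mn/d)=2d$ orbits in all. The residue $r(a,b):=a-b\pmod{2d}$ is $T$-invariant, and since each residue class contains $4mn/(2d)=2mn/d$ elements it is precisely one $T$-orbit; thus the orbits are indexed by $r\in\mathbb{Z}/2d$. The involution $\sigma$ acts on residues as $r\mapsto -r$, so the $\sigma$-invariant orbits are exactly those with $r\in\{0,d\}$, giving two such orbits; the remaining $2d-2$ orbits split into $d-1$ $\sigma$-pairs, each contributing one closed unoriented beam.

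It remains to confirm that the two $\sigma$-invariant orbits are precisely the two open paths. The four corners lift uniquely to the four $\sigma$-fixed points $(0,0),(m,0),(0,n),(m,n)\in G$; writing $m=dm'$ and $n=dn'$ with $\gcd(m',n')=1$, their residues all lie in $\{0,d\}$, and a short case analysis on the parities of $m'$ and $n'$ shows that both residues $0$ and $d$ are in fact attained by corners. Hence each $\sigma$-invariant orbit contains corners and is an open path, while the $d-1$ $\sigma$-pairs are exactly the closed paths, giving $C(m,n)=d-1$ as claimed. The only delicate step is this final parity case analysis, which is needed to rule out the possibility that one of the $\sigma$-invariant orbits somehow avoids all corners; everything else is routine orbit-counting on the cyclic group $\mathbb{Z}/2d$.
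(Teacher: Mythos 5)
Your argument is correct, and it takes a genuinely different route from both proofs in the paper. The paper first proves the identity by Euclidean reduction, using $C(n,n)=n-1$ together with $C(m,n)=C(n,m)$ and $C(m+n,n)=C(m,n)$, and then gives a second proof that computes the common step length $2\operatorname{lcm}(m,n)$ of every path and divides the $2mn$ unit-cell diagonals among the paths after discarding the two open ones. You instead unfold the billiard onto the finite torus $G=\mathbb{Z}/2m\times\mathbb{Z}/2n$, note that all $T$-orbits have length $\operatorname{lcm}(2m,2n)=2mn/d$, classify them by the invariant $a-b \bmod 2d$, and use the reversal involution $\sigma$ to separate the two $\sigma$-invariant orbits (the open paths, which contain the corner lifts, i.e.\ the $\sigma$-fixed points) from the $d-1$ $\sigma$-pairs (the closed paths). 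What your version buys: the orbits partition $G$ by construction, so the step that is tacit in the paper's diagonal count --- that every diagonal/state lies on exactly one path and no path repeats a diagonal --- comes for free; you also obtain an explicit labelling of the paths by residues modulo $2d$ and a clean algebraic criterion for which paths are open. What the paper's proofs buy: the Euclidean argument is completely elementary, and the diagonal count is shorter because it simply reads off the two open paths from the four corners. Two small remarks: the assertion that one reflection step becomes the shift $T$ under the folding map $\pi$ is the standard unfolding but deserves a sentence (the folded coordinate sequence determines its lift uniquely, so oriented trajectories biject with $T$-orbits, and reversal corresponds to $\sigma$ because $\sigma T\sigma=T^{-1}$); and your final parity case analysis can be bypassed entirely, since $\sigma$-fixed points lie only in $\sigma$-invariant orbits and a $\sigma$-invariant orbit of even length contains either $0$ or $2$ of them, so the four corner lifts force each of the two invariant orbits to contain exactly two corners.
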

We will present three proofs of Theorem 1.3.

\section{The first proof of Theorem 1.3}
The first proof of Theorem 1.3 is based on Euclidean algorithm. We first prove two lemmas.
\begin{lemma}
$C(n,n)=n-1$ holds for any $n\in \mathbb{N}^*$.
\end{lemma}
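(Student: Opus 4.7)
The plan is to exhibit $n-1$ explicit closed beams in the $n\times n$ square grid and then show by a counting argument that there are no others. For each integer $k$ with $1\le k\le n-1$, consider the $45^\circ$ light beam that enters the grid at the non-corner boundary point $(0,k)$ with initial direction $(1,1)$. A direct reflection calculation shows that this beam meets the boundary successively at $(n-k,n)$, $(n,n-k)$, $(k,0)$, and then returns to $(0,k)$, tracing a rhombus inscribed in the square. Because $1\le k\le n-1$, none of these four reflection points is a grid corner, so each such beam is closed; and distinct values of $k$ yield distinct beams, since $k$ is recovered as the second coordinate of the beam's unique intersection with the left edge.

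To see that these $n-1$ beams exhaust all closed paths, I would count non-corner boundary lattice points. Every closed beam must begin and end at non-corner boundary points, and in a square grid a $45^\circ$ beam is uniquely determined by any one of its non-corner boundary intersections (the two possible incoming directions at such a point trace out the same rhombus up to reversal). The four sides together carry $4(n-1)$ non-corner boundary lattice points, and the rhombus constructed above uses exactly four of them, one on each side. Hence the $n-1$ rhombi partition the set of non-corner boundary lattice points into groups of four, and no further closed path can exist.

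The main technical point will be verifying the reflection pattern cleanly: one must show that from $(0,k)$ with direction $(1,1)$ the beam strikes the top wall first (not the right wall), then the right wall, then the bottom wall, and finally returns to the left wall exactly at the starting point. The hypothesis $1\le k\le n-1$ is used at each reflection to guarantee that the correct wall is hit, so this inequality cannot be relaxed. Once the reflection bookkeeping is settled, the partition count $4(n-1)/4=n-1$ yields $C(n,n)=n-1$ immediately; the open paths along the two diagonals of the square account for the remaining beams, but do not enter the tally for $C(n,n)$.
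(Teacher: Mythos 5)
Your proof is correct and follows essentially the same route as the paper: you exhibit the same family of inscribed rhombi (the paper writes them as $(s,0)\to(n,t)\to(t,n)\to(0,s)\to(s,0)$ with $s+t=n$) and establish a bijection between closed paths and non-corner boundary points, parametrizing by the left edge instead of the bottom edge. Your explicit reflection bookkeeping and the $4(n-1)/4$ partition count just make precise the exhaustiveness step that the paper states more briefly.
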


\begin{proof}
In Figure 3, we see that the light starting from a point on the boundary of the $n\times n$ plane grid returns to the starting point after three reflections.
\vspace{-0.1cm}
\begin{figure}[H]
\begin{tikzpicture}[scale=0.9]
\draw[line width=0.6pt] (0,0)--(3,0)--(3,3)--(0,3)--(0,0); \draw[line width=0.4pt, color=red] (1,0)--(3,2)--(2,3)--(0,1)--(1,0);
\draw (0.5,0) node[below] {$s$}; \draw (2,0) node[below] {$t$}; \draw (3,2.5) node[right] {$s$}; \draw (3,1) node[right] {$t$};
\draw (2.5,3) node[above] {$s$}; \draw (1,3) node[above] {$t$}; \draw (0,0.5) node[left] {$s$}; \draw (0,2) node[left] {$t$};
\end{tikzpicture}
\vspace{0.2cm}
\caption{A closed path in $n\times n$ plane grid}
\end{figure}
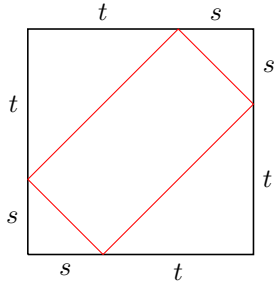
\vspace{-0.6cm}
Specifically, its trajectory is $(s,0)\rightarrow (n,t)\rightarrow (t,n)\rightarrow (0,s) \rightarrow (s,0)$, where $s+t=n$. Therefore, any closed path uniquely corresponds to a point on the boundary that is not a vertex. Hence, the truth of Lemma 2.1 is now clear, because there are $n-1$ points that are not vertices on any boundary of the $n\times n$ plane grid.
\end{proof}

\begin{lemma}
i) $C(m,n)=C(n,m)$ holds for any $m,n\in \mathbb{N}^*$.

ii) $C(m,n)=C(m+n,n)$ holds for any $m,n\in \mathbb{N}^*$.
\end{lemma}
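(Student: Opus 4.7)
Part (i) will be immediate by swapping the two coordinate axes: the map $(x,y)\mapsto(y,x)$ sends the $m\times n$ grid to the $n\times m$ grid, preserves $45$-degree line segments and reflections off the boundary, and hence bijects light paths with light paths and closed paths with closed paths.

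For part (ii), my plan is to exhibit a bijection between closed paths in the $(m+n)\times n$ grid and closed paths in the $m\times n$ grid. Partition the $(m+n)\times n$ grid as $L\cup R$, where $L=[0,m]\times[0,n]$ is an $m\times n$ subgrid and $R=[m,m+n]\times[0,n]$ is an $n\times n$ subsquare. The key ingredient is a \emph{mirror lemma}: any beam entering $R$ through its left edge at an interior point $(m,y)$ with direction $(1,\varepsilon)$, where $\varepsilon\in\{+1,-1\}$, exits $R$ back through the \emph{same} point $(m,y)$ with direction $(-1,\varepsilon)$, after exactly three reflections off the top, right, and bottom of $R$. For $\varepsilon=1$, the trajectory is
\[
(m,y)\to(m+n-y,n)\to(m+n,n-y)\to(m+y,0)\to(m,y),
\]
which is immediate from a direct billiard calculation in complete analogy with Lemma 2.1. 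The case $\varepsilon=-1$ is symmetric.

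Given the mirror lemma, every trajectory in the $(m+n)\times n$ grid restricts to a trajectory in $L$ in which each excursion through $R$ is collapsed to a single reflection off the line $x=m$ (the right boundary of $L$, i.e., of the $m\times n$ grid). This gives a map from light paths in the $(m+n)\times n$ grid to light paths in the $m\times n$ grid. Conversely, given a light path in the $m\times n$ grid, inserting an $R$-detour at each reflection off $x=m$ produces, by the mirror lemma, a light path in the $(m+n)\times n$ grid. These two constructions are inverse, and since each $R$-detour is itself a loop beginning and ending at the same point on $x=m$, the bijection preserves closedness (a trajectory returns to its starting configuration in one grid if and only if it does in the other). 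Hence $C(m+n,n)=C(m,n)$.

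The main technical subtlety I expect is ruling out closed paths trapped entirely inside $R$, to which the restriction procedure would not apply. This is handled by Lemma 2.1 applied to $R$: every slope-$\pm1$ closed path in the $n\times n$ square $R$ touches each of its four sides at a non-vertex point, in particular the left side $x=m$; but $x=m$ is transparent in the ambient $(m+n)\times n$ grid, so such a beam necessarily crosses into $L$ and cannot be confined to $R$. The remaining routine edge cases -- trajectories passing through the corners $(m,0)$ or $(m,n)$ of $L$ -- are handled directly: at those points the beam merely reflects off the top or bottom boundary of the ambient grid, so the mirror lemma need not be invoked.
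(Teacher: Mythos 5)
Your proposal is correct and follows essentially the same route as the paper: part (i) by symmetry, and part (ii) by splitting the $(m+n)\times n$ grid into an $m\times n$ part and an $n\times n$ square and observing that any excursion into the square returns to its entry point on the internal line $x=m$ with reversed horizontal direction, so removing the square acts like replacing the excursion by a mirror reflection and leaves the number of closed paths unchanged. Your write-up merely makes explicit the details the paper leaves implicit (the exact three-reflection trajectory, the bijection, paths confined to the square, and trajectories through $(m,0)$ and $(m,n)$), all of which check out.
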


\begin{proof}
The statement i) is obviously valid. In Figure 4, we see that a beam of light entering the $n\times n$ plane grid on the right from point $P$ will pass through point $P$ again and return to its original path. Therefore, when we remove the $n\times n$ plane grid on the right, the number of closed paths of the new plane grid remains unchanged. This proves the statement ii).
\vspace{-0.1cm}
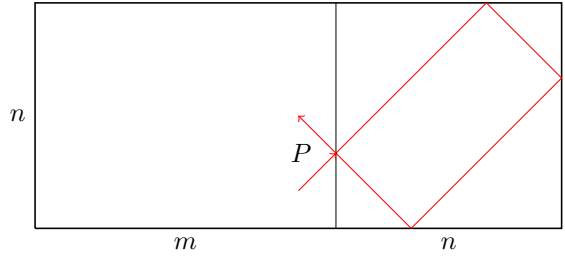
\begin{figure}[H]
\begin{tikzpicture}
\draw[line width=0.6pt] (0,0)--(7,0)--(7,3)--(0,3)--(0,0); \draw[line width=0.4pt, color=red] (5,0)--(7,2)--(6,3)--(4,1)--(5,0);
\draw[line width=0.35pt] (4,0)--(4,3); \draw[line width=0.4pt,color=red,->] (3.5,0.5)--(4,1); \draw[line width=0.4pt,color=red,->] (4,1)--(3.5,1.5);
\draw (2,0) node[below] {$m$}; \draw (5.5,0) node[below] {$n$}; \draw (0,1.5) node[left] {$n$}; \draw (3.8,1) node[left] {$P$};
\end{tikzpicture}
\vspace{0.2cm}
\caption{A closed path in $(m+n)\times n$ plane grid}
\end{figure}
\vspace{-0.6cm}
\end{proof}

\begin{proof}[{\bf\text{The first proof of Theorem 1.3}}]
Based on Lemma 2.1 and Lemma 2.2, Theorem 1.3 can be proved immediately by Euclidean algorithm.
\end{proof}

\section{The second proof of Theorem 1.3}
The second proof of Theorem 1.3 is based on a mapping about points contained in plane grids and the concept of the step length of closed paths.

The set of all points contained in the $m_1\times m_2$ plane grid is
\[\mathcal{S}^2=\{(x_1,x_2)~\vert~0\leq x_i\leq m_i,m_i\in\mathbb{N}^*,i=1,2\}.\]

\begin{definition}
We define a mapping $F$ from the set $\mathcal{S}^2$ to the set $\mathcal{S}^2$ as follows.
\[F:~\mathcal{S}^2\rightarrow \mathcal{S}^2,\quad (x_1,x_2)\mapsto F(x_1,x_2)=(f_1(x_1),f_2(x_2)),\]
where the component $f_i(x_i)$ satisfies the following rule:
\begin{equation}
t=f_i^{2nm_i-x_i-t}(x_i)=f_i^{2nm_i-x_i+t}(x_i),                                                  \label{3-2}
\end{equation}
where $0\leq t\leq m_i,~i=1,2,~n\in\mathbb{Z}$, with $f_i^0(x_i)=x_i$.
\end{definition}

\begin{remark}
The definition of $f_i(x_i)$ is equivalent to the following processes.
\begin{equation}
\begin{split}
t&\mapsto f_i(t)=t+1\mapsto f_i(t+1)=t+2\mapsto\cdots\\
&\mapsto f_i(m_i-1)=m_i\mapsto f_i(m_i)=m_i-1\\
&\mapsto f_i(m_i-1)=m_i-2\mapsto\cdots\mapsto f_i(1)=0\\
&\mapsto f_i(0)=1\mapsto f_i(1)=2\mapsto\cdots,                                                                      \label{3-1}
\end{split}
\end{equation}
and
\begin{equation}
\begin{split}
t&\mapsto f_i^{-1}(t)=t-1\mapsto f^{-1}_i(t-1)=t-2\mapsto\cdots\\
&\mapsto f_i^{-1}(2)=1\mapsto f_i^{-1}(1)=0\mapsto f_i^{-1}(0)=1\\
&\mapsto\cdots\mapsto f_i^{-1}(m_i-1)=m_i\mapsto f_i^{-1}(m_i)=m_i-1\\
&\mapsto f_i^{-1}(m_i-1)=m_i-2\mapsto\cdots,                                                                      \label{3-3}
\end{split}
\end{equation}
where $0\leq t\leq m_i,~i=1,2$.

In fact, we can use a circle to represent the changing law of component $f_i(x_i)$. Take $m_i=5$ as an example.
\vspace{-0.1cm}
\begin{figure}[H]
\begin{tikzpicture}
\draw (0,0) circle (1.5);
\node (P0) at (270:1.75) {$0$}; \node (P0) at (306:1.75) {$1$}; \node (P0) at (342:1.75) {$2$}; \node (P0) at (18:1.75) {$3$}; \node (P0) at (54:1.75) {$4$};
\node (P0) at (90:1.75) {$5$}; \node (P0) at (126:1.75) {$4$}; \node (P0) at (162:1.75) {$3$}; \node (P0) at (198:1.75) {$2$}; \node (P0) at (234:1.75) {$1$};
\end{tikzpicture}
\vspace{0.2cm}
\caption{The changing law of component $f_i(x_i)$ when $m_i=5$.}
\end{figure}
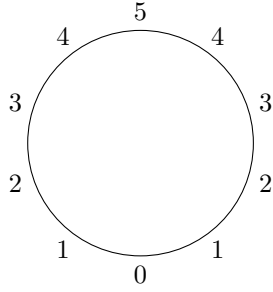
\vspace{-0.6cm}
\end{remark}

It is easy to see that when a beam of light starts from $(x_1,x_2)$, $F(x_1,x_2)$ is the next point that this beam of light passes through. Let $F^k=F\circ F\circ\cdots\circ F$ be the composite of $k$ mappings $F$ with $F^0(x_1,x_2)=(x_1,x_2)$. The chain $F^1,F^2,\cdots,F^k$ is the trajectory of the light starting from point $(x_1,x_2)$.

Before giving the proof of Theorem 1.3, we first study a more general question:
\begin{question}
For any two points $P_1=(x_1,x_2)$ and $P_2=(y_1,y_2)$ in $\mathcal{S}^2$, will the light starting from $P_1$ pass through $P_2$?
\end{question}
This is not always possible for any two points, such as points $P_1=(0,0)$ and $P_2=(0,1)$. But we have the following theorem.

\begin{theorem}
The light starting from $P_1=(x_1,x_2)$ will pass through $P_2=(y_1,y_2)$ if and only if there are $j~(j=0,1)$ and $k~(k=0,1)$ such that
\[2\gcd{(m_1,m_2)}\mid(x_2-x_1)+((-1)^jy_2+(-1)^ky_1).\]
\end{theorem}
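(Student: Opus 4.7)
The plan is to unfold the billiard so that the trajectory in direction $(1,1)$ becomes a straight line in $\mathbb{Z}^2$. First I would extract from equation (3.1) the closed form: $f_i^K(x_i) = t$ iff $K + x_i \equiv \pm t \pmod{2m_i}$. This follows by fixing $t\in[0,m_i]$ and noticing that, as $n$ ranges over $\mathbb{Z}$, the exponents $2nm_i - x_i \pm t$ exhaust precisely the residues $\pm t - x_i \pmod{2m_i}$, while the resulting $t\in[0,m_i]$ is uniquely determined. Consequently $F^K(x_1,x_2) = (y_1,y_2)$ if and only if there exist signs $\sigma,\tau\in\{\pm 1\}$ such that
\[
K \equiv \sigma y_1 - x_1 \pmod{2m_1}, \qquad K \equiv \tau y_2 - x_2 \pmod{2m_2}.
\]

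Next I would invoke the generalized Chinese Remainder Theorem: this simultaneous system has a solution $K$ iff the two right-hand sides agree modulo $\gcd(2m_1,2m_2) = 2\gcd(m_1,m_2) = 2d$. Hence the light from $P_1$ reaches $P_2$ iff, for some $\sigma,\tau\in\{\pm 1\}$,
\[
\sigma y_1 - x_1 \equiv \tau y_2 - x_2 \pmod{2d},
\]
which rearranges to $(x_2-x_1) + \sigma y_1 - \tau y_2 \equiv 0 \pmod{2d}$.

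Finally, setting $(-1)^k=\sigma$ and $(-1)^j=-\tau$ and observing that these two signs can be chosen independently over $\{\pm 1\}$, the condition reads $2d \mid (x_2-x_1) + (-1)^j y_2 + (-1)^k y_1$, which is exactly the theorem's congruence. Note that both directions of the biconditional are handled in one stroke, because every implication in the chain is reversible.

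The main obstacle is the first step: distilling the closed form $f_i^K(x_i)\equiv \pm(K+x_i)\pmod{2m_i}$ from the implicit recurrence (3.1) in a way that truly captures the dynamics of $F$, and then carefully tracking the four sign choices $(\sigma,\tau)$ when matching the final congruence to the form stated in the theorem. Once the explicit formula for $f_i^K$ is in hand, the CRT step and the algebraic relabelling of signs are routine.
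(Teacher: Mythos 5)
Your proposal is correct and follows essentially the same route as the paper: restating the defining rule (\ref{3-2}) with $t=y_i$ as the congruence $K\equiv \pm y_i - x_i \pmod{2m_i}$ and then reducing to the solvability criterion $2\gcd(m_1,m_2)\mid (x_2-x_1)+(-1)^jy_2+(-1)^ky_1$ is exactly the paper's argument, with your CRT step being the congruence-form of the paper's linear Diophantine equation $2m_2n_2-2m_1n_1=(x_2-x_1)+((-1)^jy_2+(-1)^ky_1)$. The sign bookkeeping you describe matches the paper's four cases $j,k\in\{0,1\}$.
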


\begin{proof}
For any point $P_1=(x_1,x_2)\in\mathcal{S}^2$, if the light starting from $P_1=(x_1,x_2)$ will pass through $P_2=(y_1,y_2)$, there is an integer $k$ such that
\[F^{k}(x_1,x_2)=(f_1^{k}(x_1),f_2^{k}(x_2))=(y_1,y_2).\]
By the definition of $f_i(x_i)$, we let $t=y_i$ in (\ref{3-2}) and get
\[t=y_i=f_i^{2n_im_i-x_i-y_i}(x_i)=f_i^{2n_im_i-x_i+y_i}(x_i),~i=1,2,\]
which is equivalent to
\begin{equation*}
\begin{cases}
\begin{aligned}
&y_1=f_1^{2n_1m_1-x_1-y_1}(x_1)=f_1^{2n_1m_1-x_1+y_1}(x_1),\\
&y_2=f_2^{2n_2m_2-x_2-y_2}(x_2)=f_2^{2n_2m_2-x_2+y_2}(x_2).
\end{aligned}
\end{cases}
\end{equation*}

Thus, we get the following four linear Diophantine equations.
\begin{equation}
\begin{split}
2n_1m_1-x_1-y_1&=2n_2m_2-x_2-y_2;\\
2n_1m_1-x_1-y_1&=2n_2m_2-x_2+y_2;\\
2n_1m_1-x_1+y_1&=2n_2m_2-x_2-y_2;\\                                                       \label{3-4}
2n_1m_1-x_1+y_1&=2n_2m_2-x_2+y_2.
\end{split}
\end{equation}

Eq. (\ref{3-4}) can be rewritten as
\begin{equation}
2m_2n_2-2m_1n_1=(x_2-x_1)+((-1)^jy_2+(-1)^ky_1),                                                       \label{3-5}
\end{equation}
where $j=0,1,~k=0,1$.

Eq. (\ref{3-5}) has integer solutions with respect to $n_1$ and $n_2$ if and only if
\[2\gcd{(m_1,m_2)}\mid(x_2-x_1)+((-1)^jy_2+(-1)^ky_1).\]
\end{proof}

\begin{example}
When $m_1=6,m_2=4$, the light starting from $P_1=(0,3)$ will pass through $P_2=(3,4)$ because
\[2\gcd{(6,4)}=4\mid\pm4=(x_2-x_2)+((-1)^jy_2-y_1),j=0,1.\]
But the light starting from $P_3=(0,2)$ cannot reach $P_2=(3,4)$ because
\[2\gcd{(6,4)}=4\nmid3,-5,9,1=(x_2-x_2)+((-1)^jy_2+(-1)^ky_1),j,k=0,1.\]
\vspace{-0.6cm}
\begin{figure}[H]
\begin{tikzpicture}
\draw[line width=0.6pt] (0,0)--(6,0)--(6,4)--(0,4)--(0,0);
\draw[line width=0.35pt] (0,1)--(6,1) (0,2)--(6,2) (0,3)--(6,3) (1,0)--(1,4) (2,0)--(2,4) (3,0)--(3,4) (4,0)--(4,4) (5,0)--(5,4);
\draw[line width=0.4pt, color=red] (0,3)--(1,4)--(5,0)--(6,1)--(3,4);
\draw[line width=0.4pt, color=blue] (6,0)--(2,4)--(0,2)--(2,0)--(6,4);
\node (P1) at (-0.4,3) {$P_1$};
\node (P2) at (3,4.3) {$P_2$};
\node (P3) at (-0.4,2) {$P_3$};
\end{tikzpicture}
\vspace{0.2cm}
\caption{A $6\times4$ plane grid}
\end{figure}
\end{example}
\vspace{-0.6cm}

Next, we need to give the concept of the closed path in set $\mathcal{S}^2$. Note that if there is a $k\in\mathbb{Z}^+$ such that
\[F^k(x_1,x_2)=(x_1,x_2),\]
then this means that the light starting from $(x_1,x_2)$ returns to point $(x_1,x_2)$. But the trajectory at this time may not necessarily form a closed path.

\begin{example}
When $m_1=4,m_2=3$ and $P=(2,2)$, we have
\[F^{8}(2,2)=(2,2).\]
Clearly, this trajectory is not a closed path.
\begin{figure}[H]
\begin{tikzpicture}
\draw[line width=0.6pt] (0,0)--(4,0)--(4,3)--(0,3)--(0,0);
\draw[line width=0.35pt] (0,1)--(4,1) (0,2)--(4,2) (1,0)--(1,3) (2,0)--(2,3) (3,0)--(3,3);
\draw[line width=0.4pt, color=red] (2,2)--(3,3)--(4,2)--(3,1)--(2,0)--(1,1)--(0,2)--(1,3)--(2,2);
\end{tikzpicture}
\vspace{0.2cm}
\caption{A $4\times3$ plane grid}
\end{figure}
\end{example}
\vspace{-0.6cm}

\begin{definition}
For any point $(x_1,x_2)\in\mathcal{S}^2$, if there is a $k\in\mathbb{Z}^+$ such that
\begin{equation}
\begin{cases}
\begin{aligned}
&F^k(x_1,x_2)=(x_1,x_2),\\
&F^{k-1}(x_1,x_2)=F^{-1}(x_1,x_2),                                                   \label{3-6}
\end{aligned}
\end{cases}
\end{equation}
where $F^{-1}(x_1,x_2)=(f_1^{-1}(x_1),f_2^{-1}(x_2))$, then the chain
\[F^1,F^2,\cdots,F^{k}\]
is called a closed path starting from $(x_1,x_2)$. An open path is defined as a closed path with half repeating itself.
\end{definition}

\begin{remark}
From Theorem 3.4, it is easy to see that the light starting from $(x_1,x_2)$ will definitely return to $(x_1,x_2)$, because
\[2\gcd{(m_1,m_2)}\mid0=(x_2-x_1)+((-1)x_2+x_1).\]
This means that the positive integer $k$ in (\ref{3-6}) must exist. It is easy to see that $k\geq 2$, and $k=2$ if and only if $m_1=m_2=1$.
\end{remark}

\begin{definition}
We call the smallest $k(x_1,x_2)\in\mathbb{Z}^+$ that satisfies
\begin{equation}
\begin{cases}
\begin{aligned}
&F^{k(x_1,x_2)}(x_1,x_2)=(x_1,x_2),\\
&F^{k(x_1,x_2)-1}(x_1,x_2)=F^{-1}(x_1,x_2).                                                   \label{3-7}
\end{aligned}
\end{cases}
\end{equation}
as the step length of this closed path. Correspondingly, the step length of the open path is naturally $\frac{k(x_1,x_2)}{2}$.
\end{definition}

\begin{theorem}
The step length $k(x_1,x_2)$ is an invariant independent of $x_1$ and $x_2$, denoted as $\mathcal{K}$. Specifically,
\[\mathcal{K}=2\text{lcm}(m_1,m_2),\]
where $\text{lcm}(m_1,m_2)$ is the least common multiple of $m_1$ and $m_2$.
\end{theorem}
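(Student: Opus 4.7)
My plan is to lift each coordinate map $f_i$ to the unit-shift on a cyclic group of order $2m_i$, turning the two conditions in (\ref{3-7}) into a single divisibility statement and reducing the minimal $k$ to an elementary lcm. Following the circular picture in Remark 3.2, I would introduce for $i=1,2$ the cyclic group $G_i = \mathbb{Z}/2m_i\mathbb{Z}$ with shift $\sigma_i : p \mapsto p+1$, together with the ``value'' map $v_i : G_i \to \{0,1,\ldots,m_i\}$ defined by $v_i(p) = p$ for $0\leq p\leq m_i$ and $v_i(p) = 2m_i - p$ for $m_i\leq p < 2m_i$. A direct comparison with (\ref{3-1}) gives the conjugation $f_i \circ v_i = v_i \circ \sigma_i$, so that for any lift $p_i \in v_i^{-1}(x_i)$ one has $f_i^{\,k}(x_i) = v_i(p_i + k)$ and $f_i^{-1}(x_i) = v_i(p_i - 1)$ for every $k\in\mathbb{Z}$.

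The core claim is that the two conditions in (\ref{3-7}) hold jointly if and only if $2m_i \mid k$ for $i=1,2$. The $(\Leftarrow)$ direction is immediate from $p_i + k \equiv p_i \pmod{2m_i}$. For $(\Rightarrow)$, I would first note that $v_i(a) = v_i(b)$ is equivalent to $a \equiv \pm b \pmod{2m_i}$; the conditions $f_i^{\,k}(x_i) = x_i$ and $f_i^{\,k-1}(x_i) = f_i^{-1}(x_i)$ then amount to
\[p_i + k \equiv \pm\, p_i, \qquad p_i + k - 1 \equiv \pm(p_i - 1) \pmod{2m_i}.\]
Of the four sign combinations, three force $k \equiv 0 \pmod{2m_i}$ directly (possibly pinning $p_i$ to a boundary value such as $0,1,m_i$, or $m_i+1$), while the fourth forces $2 \equiv 0 \pmod{2m_i}$, i.e.\ $m_i = 1$, in which case $2m_i=2\mid k$ follows from either equation alone. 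I expect this small case-split to be the main obstacle, since it is precisely where the value/direction ambiguity of the reflective map has to be disentangled.

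With the claim in hand, the step length $k(x_1,x_2)$ is the smallest positive integer divisible by both $2m_1$ and $2m_2$, namely $\mathrm{lcm}(2m_1, 2m_2) = 2\,\text{lcm}(m_1, m_2)$, which is manifestly independent of the starting point. This gives $\mathcal{K} = 2\,\text{lcm}(m_1, m_2)$ as asserted.
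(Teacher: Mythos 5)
Your proof is correct and is essentially the paper's own argument in cleaner clothing: the paper also converts the two conditions \eqref{3-7} into congruences modulo $2m_i$ via the defining rule \eqref{3-2} (splitting on whether $x_i>0$ or $x_i=0$ rather than on your four sign combinations), deduces $k=2n_1m_1=2n_2m_2$, and minimizes to obtain $2\operatorname{lcm}(m_1,m_2)$. The only nitpick is that $f_i^{k}(x_i)=v_i(p_i+k)$ holds for the specific lift $p_i=x_i$ rather than for an arbitrary preimage under $v_i$; since your case analysis works with a single fixed lift, this does not affect the conclusion.
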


\begin{proof}
For any point $(x_1,x_2)\in\mathcal{S}^2$, we need
\begin{equation*}
\begin{cases}
\begin{aligned}
&F^{k(x_1,x_2)}(x_1,x_2)=(f_1^{k(x_1,x_2)}(x_1),f_2^{k(x_1,x_2)}(x_2))=(x_1,x_2),\\
&F^{k(x_1,x_2)-1}(x_1,x_2)=(f_1^{k(x_1,x_2)-1}(x_1),f_2^{k(x_1,x_2)-1}(x_2))=(f_1^{-1}(x_1),f_2^{-1}(x_2)).
\end{aligned}
\end{cases}
\end{equation*}

\underline{Case 1}. If $x_i>0,~i=1,2$, then $f^{-1}(x_i)=x_i-1$, we thus have
\[F^{-1}(x_1,x_2)=(f_1^{-1}(x_1),f_2^{-1}(x_2))=(x_1-1,x_2-1).\]
In order to get the step length $k(x_1,x_2)$, we let $t=x_i$ and $t=x_i-1$, respectively, in (\ref{3-2}) and get
\[x_i=f_i^{2n_im_i-2x_i}(x_i)=f_i^{2n_im_i}(x_i),~i=1,2,\]
\[x_i-1=f_i^{2n_im_i-2x_i+1}(x_i)=f_i^{2n_im_i-1}(x_i)=f_i^{-1}(x_i),~i=1,2,\]
which is equivalent to
\begin{equation*}
\begin{cases}
\begin{aligned}
&x_1=f_1^{2n_1m_1-2x_1}(x_1)=f_1^{2n_1m_1}(x_1),\\
&x_2=f_2^{2n_2m_2-2x_2}(x_2)=f_2^{2n_2m_2}(x_2),
\end{aligned}
\end{cases}
\end{equation*}
and
\begin{equation*}
\begin{cases}
\begin{aligned}
&x_1-1=f_1^{2n_1m_1-2x_1+1}(x_1)=f_1^{2n_1m_1-1}(x_1)=f_1^{-1}(x_1),\\
&x_2-1=f_2^{2n_2m_2-2x_2+1}(x_2)=f_2^{2n_2m_2-1}(x_2)=f_2^{-1}(x_2).
\end{aligned}
\end{cases}
\end{equation*}
Hence, we have
\begin{equation}
k(x_1,x_2)=2n_1m_1=2n_2m_2.                                                                                                    \label{3.1}
\end{equation}
Therefore, we need to find $n_1$ and $n_2$ that make $2n_1m_1=2n_2m_2$ the smallest. Let $d=\gcd(m_1,m_2)$ be the greatest common factor of $m_1$ and $m_2$, then $m_1=dm'_1$ and $m_2=dm'_2$ with $\gcd(m'_1,m'_2)=1$. The equation (\ref{3.1}) is equivalent to
\begin{equation*}
\frac{n_2}{n_1}=\frac{m_1}{m_2}=\frac{m'_1}{m'_2},
\end{equation*}
which leads to
\begin{equation}
\begin{split}
n_1&=m'_2=\frac{m_2}{d}=\frac{m_2}{\gcd(m_1,m_2)},\\
n_2&=m'_1=\frac{m_1}{d}=\frac{m_1}{\gcd(m_1,m_2)}.               \label{3.2}
\end{split}
\end{equation}
Therefore, the step length of the closed path is
\[k(x_1,x_2)=2n_1m_1=2n_2m_2=\frac{2m_1m_2}{\gcd(m_1,m_2)}=2\text{lcm}(m_1,m_2),\]
where $\text{lcm}(m_1,m_2)$ is the least common multiple of $m_1$ and $m_2$.

\underline{Case 2}. If there exists $i~(i=1,2)$ such that $x_i=0$, then $f^{-1}(x_i)=1$. Taking $(x_1,x_2)=(0,x_2)$ as an example, and other cases can be discussed similarly. We have
\[F^{-1}(x_1,x_2)=(f_1^{-1}(x_1),f_2^{-1}(x_2))=(1,x_2-1).\]
In order to get the step length $k(x_1,x_2)$, we let $t=0,x_2,1$ and $t=x_2-1$, respectively, in (\ref{3-2}) and get
\begin{equation*}
\begin{cases}
\begin{aligned}
&0=f_1^{2n_1m_1}(x_1),\\
&x_2=f_2^{2n_2m_2-2x_2}(x_2)=f_2^{2n_2m_2}(x_2),
\end{aligned}
\end{cases}
\end{equation*}
and
\begin{equation*}
\begin{cases}
\begin{aligned}
&1=f_1^{2n_1m_1-1}(x_1)=f_1^{2n_1m_1+1}(x_1)=f_1^{-1}(x_1),\\
&x_2-1=f_2^{2n_2m_2-2x_2+1}(x_2)=f_2^{2n_2m_2-1}(x_2)=f_2^{-1}(x_2).
\end{aligned}
\end{cases}
\end{equation*}
Therefore, we also get Eq. (\ref{3.1}) and get the same result.

Obviously, the step length is independent of $x_1$ and $x_2$, so the step length of any closed path is the same, and the step length of open path is half that of closed path.
\end{proof}

\begin{corollary}
An arbitrary closed path has a length of $2\sqrt{2}\text{lcm}(m_1,m_2)$.
\end{corollary}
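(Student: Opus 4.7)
The plan is to combine the step-count formula $\mathcal{K}=2\operatorname{lcm}(m_1,m_2)$ from Theorem 3.9 with the geometric fact that every single step of the trajectory is a diagonal of length $\sqrt{2}$. Under this decomposition the corollary reduces to the one-line calculation $\text{length} = \mathcal{K}\cdot\sqrt{2}$.

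First I would verify that one application of $F$ moves the light by a vector of the form $(\pm 1,\pm 1)$. By Remark 3.2, the map $f_i$ always sends its argument to an integer that differs by exactly $1$: it adds $1$ until it hits $m_i$, then subtracts $1$ until it returns to $0$, and so on. Consequently, for each $(x_1,x_2)\in\mathcal{S}^2$ and each component we have $|f_i(x_i)-x_i|=1$, so that the Euclidean distance between any two consecutive lattice points on the trajectory satisfies
\[
\lVert F(x_1,x_2)-(x_1,x_2)\rVert = \sqrt{(f_1(x_1)-x_1)^2+(f_2(x_2)-x_2)^2}=\sqrt{2}.
\]
Moreover, because the light travels in straight segments at $45^{\circ}$ between successive lattice points on its path, this distance equals the actual arclength contributed by the step, not merely the chord length.

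Finally, by Theorem 3.9 a closed path consists of exactly $\mathcal{K}=2\operatorname{lcm}(m_1,m_2)$ such unit-diagonal steps, independently of the starting point. Summing the contributions gives the total length
\[
\mathcal{K}\cdot\sqrt{2}=2\sqrt{2}\operatorname{lcm}(m_1,m_2),
\]
which is the claim. The only substantive point is the $\sqrt{2}$-per-step observation; since the reflection law at a mirrored boundary preserves the $45^\circ$ slope of the beam, there is no hidden obstacle, and the corollary follows immediately.
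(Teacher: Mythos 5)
Your argument is correct and matches the paper's intended reasoning: the paper states this corollary without proof, relying on exactly the observation you make explicit, namely that each application of $F$ corresponds to one diagonal of a minimum unit (length $\sqrt{2}$) and that a closed path consists of $\mathcal{K}=2\operatorname{lcm}(m_1,m_2)$ such steps by the step-length theorem. The only slip is a label: the step-length result is Theorem 3.10, not 3.9.
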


\begin{proof}[{\bf\text{The second proof of Theorem 1.3}}]
Note that each mapping $F$ corresponds to a diagonal of a minimum unit in the plane grid. A $m_1\times m_2$ plane grid has a total of $m_1m_2$ minimum units and $2m_1m_2$ diagonals. Since any plane grid has only two open paths, we get that the number of closed paths is
\[C(m_1,m_2)=\frac{2m_1m_2-2\text{lcm}(m_1,m_2)}{2\text{lcm}(m_1,m_2)}=\gcd(m_1,m_2)-1.\]
\end{proof}

We are also interested in how many boundary points the closed path will pass through, which corresponds to how many times the light is reflected in $\mathcal{S}^2$.

The set of all points on the boundary of the $m_1\times m_2$ plane grid is
\[\overline{\mathcal{S}}^2=\{(x_1,x_2)\in\mathcal{S}^2~\vert~\exists i,x_i=0~\text{or}~m_i,~i=1,2\}.\]

\begin{corollary}
From (\ref{3.2}), the number of points in $\overline{\mathcal{S}}^2$ that any closed path passes through, that is, the number of boundary points, is
\[\begin{split}
B(m_1,m_2)&=2n_1+2n_2\\
&=2\times\frac{m_2}{\gcd(m_1,m_2)}+2\times\frac{m_1}{\gcd(m_1,m_2)}\\
&=\frac{2(m_1+m_2)}{\gcd(m_1,m_2)}.
\end{split}\]
\end{corollary}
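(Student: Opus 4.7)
My plan is to count, within one closed path of length $\mathcal{K}=2\,\text{lcm}(m_1,m_2)$, the number of iterates lying in $\overline{\mathcal{S}}^2$, splitting the count according to which coordinate is at its extreme.

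First I would invoke the circular picture (\ref{3-1}) from Remark 3.2 to observe that $f_i^{\,j}(x_i)$ is periodic in $j$ with period $2m_i$, and within each period it attains each boundary value $0$ and $m_i$ exactly once; hence in one period $f_i^{\,j}(x_i)\in\{0,m_i\}$ for exactly two indices. Since $\mathcal{K}=2n_1m_1=2n_2m_2$ by Theorem 3.8 together with (\ref{3.2}), over the $\mathcal{K}$ iterations comprising the closed path the first coordinate lies in $\{0,m_1\}$ exactly $2n_1$ times, and the second coordinate lies in $\{0,m_2\}$ exactly $2n_2$ times. Every visited point in $\overline{\mathcal{S}}^2$ is counted in at least one of these two tallies.

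The key remaining step is to verify that these two events are disjoint along the closed trajectory, i.e.\ that the path visits no vertex of the grid. My argument for this would be: at a vertex $(v_1,v_2)$ with $v_i\in\{0,m_i\}$, the circular picture forces $f_i(v_i)=f_i^{-1}(v_i)$ for each $i$, so $F(v_1,v_2)=F^{-1}(v_1,v_2)$. Applying this at the first vertex-index $j_0$ on the orbit, the relation $F^{j_0+1}(x_1,x_2)=F^{j_0-1}(x_1,x_2)$ forces the trajectory to retrace itself from that point onward; by Definition 3.7, this is exactly the situation defining an open path, not a closed one. Hence a genuine closed path avoids all four vertices, the two boundary tallies do not overlap, and
\[
B(m_1,m_2)=2n_1+2n_2=\frac{2m_2}{\gcd(m_1,m_2)}+\frac{2m_1}{\gcd(m_1,m_2)}=\frac{2(m_1+m_2)}{\gcd(m_1,m_2)}.
\]

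The main obstacle is this vertex-avoidance step: the per-coordinate count is immediate from the $2m_i$-periodicity of $f_i$, and at a non-vertex boundary point only one coordinate is extremal, so such a point automatically contributes to exactly one tally. The only way the naive sum $2n_1+2n_2$ could overcount is via vertex visits, so the corollary reduces to showing that closed paths in the sense of Definition 3.7 genuinely exclude the four corners; once this is established, the counting is finished.
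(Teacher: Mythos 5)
Your proposal is correct and follows essentially the argument the paper leaves implicit behind ``From (\ref{3.2})'': over one closed cycle of length $\mathcal{K}=2n_im_i$ the $i$-th coordinate completes $n_i$ periods of length $2m_i$, hitting each wall value $0$ and $m_i$ once per period, giving $2n_1+2n_2$ boundary visits. Your extra vertex-avoidance step (a vertex visit forces $F^{j_0+k}=F^{j_0-k}$, i.e.\ the retracing behaviour that characterizes open paths in Definition 3.7) is a sound justification of a point the paper does not spell out, rather than a different method.
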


\begin{corollary}
Another interesting fact is that the sum of coordinates of all points (which may have the same points) that any closed path passes through is the same. Specifically,
\[\sum_{k=0}^{\mathcal{K}}f^{k}(x_i)=2n_i\bigg(\sum_{t=0}^{m_i}t\bigg)-n_im_i=n_im_i^2=m_i\text{lcm}(m_1,m_2),\]
and
\[\sum_{k=0}^{\mathcal{K}}\sum_{i=1}^{2}f^{k}(x_i)=(m_1+m_2)\text{lcm}(m_1,m_2).\]
\end{corollary}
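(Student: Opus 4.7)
The plan is to exploit the periodicity of each component $f_i$: by Remark 3.2, the sequence $f_i^0(x_i),f_i^1(x_i),f_i^2(x_i),\ldots$ is periodic with period exactly $2m_i$, one complete cycle visiting the values $0,1,\ldots,m_i-1,m_i,m_i-1,\ldots,1$ in some cyclic order before returning. By Theorem 3.9 the step length satisfies $\mathcal{K}=2\text{lcm}(m_1,m_2)=2n_im_i$, so the summation range covers precisely $n_i$ complete periods of $f_i$. Because the sum runs over a whole number of periods, it is independent of the starting value $x_i$, which is exactly the invariance claim in the corollary.

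Inside one period I would tally how many times each value in $\{0,1,\ldots,m_i\}$ appears: the extreme values $0$ and $m_i$ are turning points and occur exactly once, whereas every intermediate value $1,2,\ldots,m_i-1$ is hit twice, once on the way up and once on the way down. Summing over the $n_i$ complete periods gives a contribution of $n_i\cdot 0$ from the value $0$, $n_i\cdot m_i$ from the value $m_i$, and $2n_i\cdot t$ from each intermediate value $t$. Collecting these contributions gives
\[
\sum_{k} f_i^{k}(x_i)=2n_i\sum_{t=0}^{m_i}t-n_i\cdot 0-n_i\cdot m_i=n_im_i(m_i+1)-n_im_i=n_im_i^{2},
\]
which matches the claimed intermediate expression. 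Substituting $n_im_i=\text{lcm}(m_1,m_2)$ from Theorem 3.9 yields $m_i\,\text{lcm}(m_1,m_2)$, and adding the identities for $i=1,2$ yields the second formula.

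There is no serious obstacle here; the argument is essentially a bookkeeping exercise. The only point requiring a bit of care is verifying that the multiplicity count above is correct when the starting value $x_i$ is itself not a turning point, but this is immediate from the periodicity of $f_i$, since a sum over a whole number of periods is invariant under cyclic shift of the sequence. I would flag this observation explicitly to justify why the same multiplicities apply to the trajectory of any closed path, regardless of base point.
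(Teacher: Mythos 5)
Your proof is correct and matches the computation the paper intends: the intermediate expression $2n_i\bigl(\sum_{t=0}^{m_i}t\bigr)-n_im_i$ in the corollary is exactly your multiplicity count (each interior value $1,\dots,m_i-1$ hit $2n_i$ times, the turning values $0$ and $m_i$ only $n_i$ times) over the $n_i$ complete periods of $f_i$ contained in $\mathcal{K}=2n_im_i$ steps, combined with $n_im_i=\mathrm{lcm}(m_1,m_2)$ from Theorem 3.10. The only discrepancy is the summation bound in the statement: $\sum_{k=0}^{\mathcal{K}}$ literally has $\mathcal{K}+1$ terms and double-counts $x_i$ (which would spoil the claimed independence of $x_i$), so your reading of the sum as exactly $\mathcal{K}$ consecutive iterates, i.e.\ a whole number of periods, is the correct interpretation of what the formula requires.
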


In Definition 3.7, we call the chain $F^1,F^2,\cdots,F^{k}$ a closed path starting from $(x_1,x_2)$. Now let us consider the set $\mathcal{F}$ composed of the mappings $F^k$, i.e.,
\[\mathcal{F}=\{F^1,F^2,\cdots,F^k,\cdots\}.\]

\begin{corollary}
By Theorem 3.10, the step length $\mathcal{K}$ of any closed path is an invariant and $\mathcal{K}=2\text{lcm}(m_1,m_2)$, so the set $\mathcal{F}$ is a finite set, and its order is $\mathcal{K}$. Furthermore, the set $\mathcal{F}$ forms an Abelian group generated by $F^1$ under the composition of the mappings $F^k$. In particular,
\[\mathcal{F}\simeq\mathbb{Z}_{\mathcal{K}}.\]
Therefore, any closed path corresponds to a finitely generated Abelian group.
\end{corollary}

Now let us consider another interesting question:
\begin{question}
Given any two points $P_1$ and $P_2$ in the $m_1\times m_2$ plane grid, can we reach $P_2$ from $P_1$ along the diagonal of a minimum unit?
\end{question}

\begin{example}
When $m_1=6,m_2=4$, the point $P_1$ can reach $P_2$ but cannot reach $P_3$.
\begin{figure}[H]
\begin{tikzpicture}
\draw[line width=0.6pt] (0,0)--(6,0)--(6,4)--(0,4)--(0,0);
\draw[line width=0.35pt] (0,1)--(6,1) (0,2)--(6,2) (0,3)--(6,3) (1,0)--(1,4) (2,0)--(2,4) (3,0)--(3,4) (4,0)--(4,4) (5,0)--(5,4);
\draw[line width=0.4pt, color=blue] (0,2)--(1,1)--(2,2)--(4,0);
\node (P1) at (-0.4,2) {$P_1$};
\node (P2) at (4,-0.4) {$P_2$};
\node (P3) at (6.4,1) {$P_3$};
\end{tikzpicture}
\vspace{0.2cm}
\caption{A $6\times4$ plane grid}
\end{figure}
\end{example}
\vspace{-0.6cm}

To solve Question 3.14, we define a mapping $F_{s,t}$ from the set $\mathcal{S}^2$ to the set $\mathcal{S}^2$ as follows.
\[F_{s,t}:~\mathcal{S}^2\rightarrow \mathcal{S}^2,\quad (x_1,x_2)\mapsto F_{s,t}(x_1,x_2)=(f_1^{(-1)^s}(x_1),f_2^{(-1)^t}(x_2)),~s,t=0,1,\]
with
\[F_{s,t}^k(x_1,x_2)=(f_1^{(-1)^sk}(x_1),f_2^{(-1)^tk}(x_2)),~k\in\mathbb{Z},\]
where the component $f_i(x_i)$ satisfies the rule (\ref{3-2}).

Let $\mathcal{G}$ be the set composed of some composites of the mappings $F_{s,t}^k$, i.e.,
\[\mathcal{G}=\{F_{s_1,t_1}^{k_1}\circ\cdots\circ F_{s_r,t_r}^{k_r}~|~s_i,t_i=0,1,k_i\in\mathbb{Z},1\leq i\leq r,r\in\mathbb{N}^*\}.\]

It is easy to verify that the set $\mathcal{G}$ forms an Abelian group under the composition of the mappings $F_{s,t}^k$. Furthermore, $\mathcal{G}$ is also finitely generated by $4$ elements $F_{0,0},F_{0,1}$. Therefore,
\[\mathcal{G}=\{F_{0,0}^{k_1}\circ F_{0,1}^{k_2}~|~k_i\in\mathbb{Z},i=1,2\}.\]
By Theorem 3.10, for any $F\in\mathcal{G}$, we have
\[F^{\mathcal{K}}=F.\]
Hence, $\mathcal{G}$ is a finitely generated torsion abelian group and obviously a finite group. Then
\[\mathcal{G}\simeq\mathbb{Z}_{\mathcal{K}}^2=\mathbb{Z}_{\mathcal{K}}\oplus\mathbb{Z}_{\mathcal{K}}.\]

Suppose that $F\in\mathcal{G}$ and $P\in\mathcal{S}^2$. An operation of $\mathcal{G}$ on $\mathcal{S}^2$ is given by the mapping
\[\mathcal{G}\times\mathcal{S}^2\rightarrow\mathcal{S}^2,\quad (F,P)\mapsto F(P).\]

Note that
\[f_1^{(-1)^s}(x_1)\equiv x_1+1\pmod{2},~~f_2^{(-1)^t}(x_2)\equiv x_2+1\pmod{2},\]
thus
\[f_1^{(-1)^s}(x_1)+f_2^{(-1)^t}(x_2)\equiv x_1+x_2\pmod{2}\]
holds for any $s~(s=0,1)$ and $t~(t=0,1)$. We have

\begin{theorem}
The set $\mathcal{S}^2$ is divided into the following two orbits.
\[\begin{split}
S_e&=\{P=(x_1,x_2)~|~x_1+x_2\equiv0\pmod{2}\},\\
S_o&=\{P=(x_1,x_2)~|~x_1+x_2\equiv1\pmod{2}\},
\end{split}\]
with
\[S_e\cup S_o=\mathcal{S}^2,~~S_e\cap S_o=\varnothing,\]
and
\begin{equation*}
|S_e|=
\begin{cases}
|S_o|+1, &m_1\equiv m_2\equiv 0\pmod{2},\\
|S_o|,   &\text{otherwise}.
\end{cases}
\end{equation*}
\end{theorem}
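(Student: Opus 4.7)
The plan is to prove three statements: (i) both $S_e$ and $S_o$ are $\mathcal{G}$-invariant, (ii) $\mathcal{G}$ acts transitively on each, so each is genuinely a single orbit, and (iii) the stated cardinality comparison holds. Part (i) is immediate from the congruence $f_1^{(-1)^s}(x_1)+f_2^{(-1)^t}(x_2)\equiv x_1+x_2\pmod{2}$ displayed just before the theorem: the parity of $x_1+x_2$ is preserved by every generator $F_{s,t}$, and hence by every element of $\mathcal{G}$. This shows $S_e$ and $S_o$ are disjoint unions of orbits, and $S_e\cup S_o=\mathcal{S}^2$ is clear.

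For part (ii) I would use the structural description $\mathcal{G}=\{F_{0,0}^{k_1}\circ F_{0,1}^{k_2}\mid k_1,k_2\in\mathbb{Z}\}$ together with the elementary identity
\[
F_{0,0}^{k_1}\circ F_{0,1}^{k_2}(x_1,x_2)=\bigl(f_1^{k_1+k_2}(x_1),\,f_2^{k_1-k_2}(x_2)\bigr).
\]
So given $(x_1,x_2)$ and $(y_1,y_2)$ in the same parity class, the task reduces to finding integers $a,b$ with $a\equiv b\pmod{2}$ such that $f_1^{a}(x_1)=y_1$ and $f_2^{b}(x_2)=y_2$; one may then recover $k_1=(a+b)/2$ and $k_2=(a-b)/2$. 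Exactly as in the proof of Theorem 3.4, the one-dimensional equation $f_i^{a}(x_i)=y_i$ is solvable whenever $a\equiv y_i-x_i$ or $a\equiv-y_i-x_i\pmod{2m_i}$. Taking the specific values $a:=y_1-x_1$ and $b:=y_2-x_2$, the hypothesis $x_1+x_2\equiv y_1+y_2\pmod{2}$ immediately yields $a-b=(y_1-y_2)-(x_1-x_2)\equiv 0\pmod{2}$, so the required integer $k_1,k_2$ exist and transitivity follows.

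For part (iii), let $E_i$ and $O_i$ denote the number of even and odd integers in $\{0,1,\ldots,m_i\}$, respectively. A direct count gives $|S_e|=E_1E_2+O_1O_2$ and $|S_o|=E_1O_2+O_1E_2$, so
\[
|S_e|-|S_o|=(E_1-O_1)(E_2-O_2).
\]
Since $E_i-O_i=1$ when $m_i$ is even and $E_i-O_i=0$ when $m_i$ is odd, the product equals $1$ exactly when both $m_1$ and $m_2$ are even and $0$ otherwise, yielding the claimed dichotomy.

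I expect the main subtlety to lie in the transitivity step: the constraint $a\equiv b\pmod{2}$ is precisely what couples the two coordinates, and is exactly the obstruction that forces two orbits rather than one. The invariance is a direct consequence of the congruence already exhibited in the text, and the cardinality comparison is routine parity bookkeeping once the orbits have been identified.
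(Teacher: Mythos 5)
Your proposal is correct, and it is in fact more complete than what the paper offers: the paper's entire justification of Theorem 3.16 is the parity congruence $f_1^{(-1)^s}(x_1)+f_2^{(-1)^t}(x_2)\equiv x_1+x_2\pmod{2}$ displayed just before the statement, which only gives your part (i) (each orbit lies inside one parity class); neither the transitivity within $S_e$ and $S_o$ nor the cardinality comparison is argued there (and the $p$-dimensional analogue, Theorem 4.15, likewise proves only the ``same orbit implies same index'' direction). Your part (ii) supplies the missing half by exhibiting an explicit connecting element: writing a group element as $F_{0,0}^{k_1}\circ F_{0,1}^{k_2}$ acting by $(f_1^{k_1+k_2},f_2^{k_1-k_2})$, choosing $a=y_1-x_1$, $b=y_2-x_2$, and using the parity hypothesis to get integer $k_1=(a+b)/2$, $k_2=(a-b)/2$; this is exactly the point where the two-orbit structure is forced, and your computation checks out against the defining rule (3.1) with $n=0$. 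One caveat worth flagging (it affects the paper as much as you): the identity $f_i^{a}\circ f_i^{b}=f_i^{a+b}$, hence the displayed formula for $F_{0,0}^{k_1}\circ F_{0,1}^{k_2}$, is a convention inherited from the paper's definition of the iterates via (3.1), where $f_i$ by itself is not a well-defined self-map of $\{0,\dots,m_i\}$ (the direction of travel matters); since the paper uses this additivity throughout to make $\mathcal{F}$ and $\mathcal{G}$ groups, your reliance on it is consistent with the framework rather than a gap. Your part (iii), with $|S_e|-|S_o|=(E_1-O_1)(E_2-O_2)$ and $E_i-O_i\in\{0,1\}$ according to the parity of $m_i$, is routine and correct, and again is a verification the paper simply asserts.
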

Now the answer to Question 3.14 is clear. The point $P_1$ can reach $P_2$ if and only if $P_1$ and $P_2$ are in the same orbit.

\section{The generalization of the problem}
In this section, we first extend Questions 1.1 and 3.3 to $p$-dimensional space, where $p\geq2$. The second proof of Theorem 1.3 provides us with a way to define Questions 1.1 and 3.3 abstractly in high-dimensional space.

\begin{definition}
A $m_1\times m_2\times\cdots\times m_p$ spatial grid is defined as the following point set:
\[\mathcal{S}^p=\{(x_1,x_2,\cdots,x_p)~\vert~0\leq x_i\leq m_i,m_i\in\mathbb{N}^*,i=1,2,\cdots,p,~p\geq2\}.\]

The set of vertices of $m_1\times m_2\times\cdots\times m_p$ spatial grid is defined as
\[\hat{\mathcal{S}^p}=\{(x_1,x_2,\cdots,x_p)\in\mathcal{S}^p~\vert~\text{for all}~i,~x_i=0~\text{or}~m_i\}.\]
\end{definition}

\begin{definition}
We define a mapping $F$ from the set $\mathcal{S}^p$ to the set $\mathcal{S}^p$ as follows.
\[F:~\mathcal{S}^p\rightarrow \mathcal{S}^p,\quad (x_1,x_2,\cdots,x_p)\mapsto F(x_1,x_2,\cdots,x_p)=(f_1(x_1),f_2(x_2),\cdots,f_p(x_p)),\]
where the component $f_i(x_i)$ satisfies the following rule:
\begin{equation}
t=f_i^{2nm_i-x_i-t}(x_i)=f_i^{2nm_i-x_i+t}(x_i),                                                      \label{4-1}
\end{equation}
where $0\leq t\leq m_i,~i=1,2,\cdots,p,~n\in\mathbb{Z}$, with $f_i^0(x_i)=x_i$.
\end{definition}
The definition of $f_i(x_i)$ is also equivalent to the following processes (\ref{3-1}) and (\ref{3-3}).

We first consider the generalization of Question 3.3. For any two points $P_1=(x_1,x_2,\cdots,x_p)$ and $P_2=(y_1,y_2,\cdots,y_p)$, we have

\begin{theorem}
The light starting from $P_1=(x_1,x_2,\cdots,x_p)$ will pass through $P_2=(y_1,y_2,\cdots,y_p)$ if and only if there are $k_i~(k_i=0,1),i=1,2,\cdots,p$ such that the following Diophantine equations
\begin{equation}
\begin{split}
2n_1m_1-x_1+(-1)^{k_1}y_1&=2n_2m_2-x_2+(-1)^{k_2}y_2\\
&=\cdots=2n_pm_p-x_p+(-1)^{k_p}y_p                                                                   \label{4-2}
\end{split}
\end{equation}
has an integer solution with respect to $n_i,~i=1,2,\cdots,p$.
\end{theorem}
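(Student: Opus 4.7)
The plan is to imitate the argument of Theorem 3.4 coordinate by coordinate, using the fact that $F$ acts componentwise. The light starting from $P_1$ passes through $P_2$ if and only if there exists some $k \in \mathbb{Z}$ with $F^k(P_1) = P_2$, and since $F^k(x_1,\ldots,x_p) = (f_1^k(x_1),\ldots,f_p^k(x_p))$, this amounts to the simultaneous scalar conditions $f_i^k(x_i) = y_i$ for every $i=1,\ldots,p$, with \emph{the same} exponent $k$ across all coordinates. The task therefore splits into two pieces: first describe, for each $i$, the full set of exponents $k$ that solve $f_i^k(x_i)=y_i$, and then impose a common value.

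For the first piece, I fix $i$ and apply rule (\ref{4-1}) with $t = y_i$; this produces
\[y_i = f_i^{2n m_i - x_i - y_i}(x_i) = f_i^{2n m_i - x_i + y_i}(x_i)\]
for every $n\in\mathbb{Z}$. The circle picture for $f_i$ recorded in Remark 3.2 shows that $f_i$ has period $2m_i$ and that within one period the value $y_i$ is attained exactly twice (once collapsing to a single occurrence when $y_i \in \{0, m_i\}$, which only makes the parameterization redundant, not false). Consequently $f_i^k(x_i) = y_i$ if and only if $k = 2n_i m_i - x_i + (-1)^{k_i} y_i$ for some $n_i \in \mathbb{Z}$ and some $k_i \in \{0,1\}$; this is the essential calculation behind the theorem.

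For the second piece, I combine the $p$ parameterizations: the existence of a common $k$ realizing $f_i^k(x_i) = y_i$ for all $i$ is exactly the assertion that, for some choice of $(k_1,\ldots,k_p)\in\{0,1\}^p$, the system
\[2n_1 m_1 - x_1 + (-1)^{k_1}y_1 = 2n_2 m_2 - x_2 + (-1)^{k_2}y_2 = \cdots = 2n_p m_p - x_p + (-1)^{k_p}y_p\]
admits an integer solution $(n_1,\ldots,n_p)$, which is precisely (\ref{4-2}). Conversely, any such solution supplies an explicit $k$ (the common value) with $F^k(P_1)=P_2$, so both directions are established. I do not expect any real obstacle here; the only mild bookkeeping is checking the coincidence in the boundary cases $y_i\in\{0,m_i\}$ and being explicit that the word ``simultaneously'' in the first paragraph translates exactly into the chained equality of (\ref{4-2}) rather than merely into congruences modulo $2m_i$ coordinate by coordinate.
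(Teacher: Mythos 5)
Your proposal is correct and follows essentially the same route as the paper: substitute $t=y_i$ into rule (\ref{4-1}) to parameterize all exponents $k$ with $f_i^k(x_i)=y_i$ as $k=2n_im_i-x_i+(-1)^{k_i}y_i$, then demand a common $k$ across the coordinates, which is exactly the chained system (\ref{4-2}). Your write-up is in fact somewhat more careful than the paper's very terse argument, since you make the converse direction and the boundary cases $y_i\in\{0,m_i\}$ explicit.
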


\begin{proof}
For any point $P_1=(x_1,x_2,\cdots,x_p)\in\mathcal{S}^p$, if the light starting from $P_1$ will pass through $P_2=(y_1,y_2,\cdots,y_p)$, there is an integer $k$ such that
\[F^{k}(x_1,x_2,\cdots,x_p)=(f_1^{k}(x_1),f_2^{k}(x_2),\cdots,f_p^{k}(x_p))=(y_1,y_2,\cdots,y_p).\]
By the definition of $f_i(x_i)$, we let $t=y_i$ in (\ref{4-1}) and get
\[y_i=f_i^{2n_im_i-x_i+(-1)^{k_i}y_i}(x_i),~k_i=0,1,~i=1,2,\cdots,p.\]
Therefore, we get Theorem 4.3.
\end{proof}

\begin{remark}
A necessary condition for Eq. (\ref{4-2}) to have a solution is for any $1\leq i<j\leq p$, there are $k_i=0,1,k_j=0,1$ such that
\begin{equation}
2\gcd{(m_i,m_j)}\mid(x_j-x_i)+((-1)^{k_j}y_j+(-1)^{k_i}y_i).                                                            \label{4-3}
\end{equation}
Especially, when $p=2$, condition (\ref{4-3}) is sufficient, which is Theorem 3.4.
\end{remark}

\begin{definition}
For any point $(x_1,x_2,\cdots,x_p)\in\mathcal{S}^p$, if there is a $k\in\mathbb{Z}^+$ such that
\begin{equation}
\begin{cases}
\begin{aligned}
&F^k(x_1,x_2,\cdots,x_p)=(x_1,x_2,\cdots,x_p),\\
&F^{k-1}(x_1,x_2,\cdots,x_p)=F^{-1}(x_1,x_2,\cdots,x_p),                                                      \label{4-4}
\end{aligned}
\end{cases}
\end{equation}
where $F^{-1}(x_1,x_2,\cdots,x_p)=(f_1^{-1}(x_1),f_2^{-1}(x_2),\cdots,f_p^{-1}(x_p))$, then the chain
\[F^1,F^2,\cdots,F^{k}\]
is called a closed path starting from $(x_1,x_2,\cdots,x_p)$. An open path is defined as a closed path with half repeating itself.
\end{definition}

\begin{remark}
From Theorem 4.3, it is easy to see that the light starting from $(x_1,x_2,\cdots,x_p)$ will definitely return to $(x_1,x_2,\cdots,x_p)$, because when $k_i=0$ for all $i=0,1,\cdots,p$, the following Diophantine equation
\[2n_1m_1=2n_2m_2=\cdots=2n_pm_p\]
has infinitely many integer solutions. This means that the positive integer $k$ in (\ref{4-4}) must exist.
\end{remark}

\begin{definition}
We call the smallest $k(x_1,x_2,\cdots,x_p)\in\mathbb{Z}^+$ that satisfies
\begin{equation}
\begin{cases}
\begin{aligned}
&F^{k(x_1,x_2,\cdots,x_p)}(x_1,x_2,\cdots,x_p)=(x_1,x_2,\cdots,x_p),\\
&F^{k(x_1,x_2,\cdots,x_p)-1}(x_1,x_2,\cdots,x_p)=F^{-1}(x_1,x_2,\cdots,x_p).                                                   \label{4-5}
\end{aligned}
\end{cases}
\end{equation}
as the step length of this closed path. Correspondingly, the step length of the open path is naturally $\frac{k(x_1,x_2,\cdots,x_p)}{2}$.
\end{definition}

\begin{theorem}
The step length $k(x_1,x_2,\cdots,x_p)$ is an invariant independent of $x_1,x_2,\cdots,x_p$, denoted as $\mathcal{K}$. Specifically,
\[\mathcal{K}=2\text{lcm}(m_1,m_2,\cdots,m_p).\]
where $\text{lcm}(m_1,m_2,\cdots,m_p)$ is the least common multiple of $m_1,m_2,\cdots,m_p$.
\end{theorem}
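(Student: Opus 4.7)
The plan is to follow the two-case analysis carried out in the proof of Theorem 3.10 and extend it from two components to $p$. The structural content is unchanged; what replaces the single coupling $2n_1 m_1 = 2n_2 m_2$ is a simultaneous system over $p$ indices.

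First I would handle the generic case in which every coordinate lies in the interior, so $0 < x_i < m_i$ for all $i$. Then $f_i^{-1}(x_i) = x_i - 1$, and I would substitute $t = x_i$ and then $t = x_i - 1$ into the defining rule (\ref{4-1}) to convert the two requirements in (\ref{4-5}) into the system
\[
x_i = f_i^{2n_i m_i}(x_i), \qquad x_i - 1 = f_i^{2n_i m_i - 1}(x_i), \quad i = 1, 2, \ldots, p.
\]
Both conditions force the common step count to equal $2 n_i m_i$ for each $i$, so a positive step length $k$ must admit simultaneous integer representations $k = 2n_1 m_1 = 2n_2 m_2 = \cdots = 2n_p m_p$. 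By definition of the least common multiple, the minimal such $k$ is exactly $2\,\text{lcm}(m_1, \ldots, m_p)$.

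Next I would dispose of the boundary cases in which some $x_j \in \{0, m_j\}$. Taking $x_j = 0$ as a representative instance (the case $x_j = m_j$ is symmetric), we have $f_j^{-1}(0) = 1$; substituting $t = 0$ and $t = 1$ into (\ref{4-1}) yields the same relation $k = 2n_j m_j$ for that coordinate, exactly as in Case 2 of the proof of Theorem 3.10. Combining the interior and boundary contributions across all coordinates produces the same simultaneous system, and the joint minimum is again $2\,\text{lcm}(m_1, \ldots, m_p)$. Because this value is independent of the base point, the step length is indeed an invariant $\mathcal{K}$, establishing the claim.

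The main obstacle is not conceptual but organizational: one must verify carefully that the step count extracted from (\ref{4-1}) truly reduces to $2n_i m_i$ in every coordinate slot, regardless of whether that slot is interior or boundary, so that the joint minimization over $i = 1, \ldots, p$ produces the least common multiple rather than some larger shared multiple. Once this bookkeeping is in place, the argument is a direct generalization of the two-dimensional proof and introduces no new ideas.
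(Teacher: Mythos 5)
Your proposal is correct and follows essentially the same route as the paper's proof: the same substitution of $t=x_i$ and $t=x_i-1$ into (\ref{4-1}) (with the boundary adjustment when a coordinate is $0$), reduction to the simultaneous system $k=2n_1m_1=\cdots=2n_pm_p$, and minimization giving $n_i=\text{lcm}(m_1,\ldots,m_p)/m_i$, hence $\mathcal{K}=2\,\text{lcm}(m_1,\ldots,m_p)$. The only cosmetic difference is that you split cases as interior versus boundary while the paper splits on $x_i>0$ versus $x_i=0$, which amounts to the same computation.
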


\begin{proof}
For any point $(x_1,x_2,\cdots,x_p)\in\mathcal{S}^p$, we need
\[\begin{split}
&F^{k(x_1,x_2,\cdots,x_p)}(x_1,x_2,\cdots,x_p)=(x_1,x_2,\cdots,x_p)\\
=~&(f_1^{k(x_1,x_2,\cdots,x_p)}(x_1),f_2^{k(x_1,x_2,\cdots,x_p)}(x_2),\cdots,f_p^{k(x_1,x_2,\cdots,x_p)}(x_p)),
\end{split}\]
and
\[\begin{split}
&F^{k(x_1,x_2,\cdots,x_p)-1}(x_1,x_2,\cdots,x_p)=(f_1^{-1}(x_1),f_2^{-1}(x_2),\cdots,f_p^{-1}(x_p))\\
=~&(f_1^{k(x_1,x_2,\cdots,x_p)-1}(x_1),f_2^{k(x_1,x_2,\cdots,x_p)-1}(x_2),\cdots,f_p^{k(x_1,x_2,\cdots,x_p)-1}(x_p)).
\end{split}\]

\underline{Case 1}. If $x_i>0,~1\leq i\leq p$, then $f^{-1}(x_i)=x_i-1$, we thus have
\[\begin{split}
F^{-1}(x_1,x_2,\cdots,x_p)&=(f_1^{-1}(x_1),f_2^{-1}(x_2),\cdots,f_p^{-1}(x_p))\\
&=(x_1-1,x_2-1,\cdots,x_p-1).
\end{split}\]
In order to get the step length $k(x_1,x_2,\cdots,x_p)$, we let $t=x_i$ and $t=x_i-1$, respectively, in (\ref{4-1}) and get
\[x_i=f_i^{2n_im_i-2x_i}(x_i)=f_i^{2n_im_i}(x_i),~1\leq i\leq p,\]
\[x_i-1=f_i^{2n_im_i-2x_i+1}(x_i)=f_i^{2n_im_i-1}(x_i)=f_i^{-1}(x_i),~1\leq i\leq p,\]
Hence, we have
\begin{equation}
k(x_1,x_2,\cdots,x_p)=2n_1m_1=2n_2m_2=\cdots=2n_pm_p.                                                                        \label{4-6}
\end{equation}
Therefore, we need to find $n_i,1\leq i\leq p$ that make
\[2n_1m_1=2n_2m_2=\cdots=2n_pm_p\]
the smallest. Thus, we have
\begin{equation}
n_i=\frac{\text{lcm}(m_1,m_2,\cdots,m_p)}{m_i},                                                                         \label{4.1}
\end{equation}
where $\text{lcm}(m_1,m_2,\cdots,m_p)$ is the least common multiple of $m_i,1\leq i\leq p$.

\underline{Case 2}. If there exists $i~(1\leq i\leq p)$ such that $x_i=0$, then $f^{-1}(x_i)=1$. Taking $(x_1,x_2)=(0,x_2,\cdots,x_p)$ as an example, and other cases can be discussed similarly. We have
\[\begin{split}
F^{-1}(x_1,x_2,\cdots,x_p)&=(f_1^{-1}(x_1),f_2^{-1}(x_2),\cdots,f_p^{-1}(x_p))\\
&=(1,x_2-1,\cdots,x_p-1).
\end{split}\]
In order to get the step length $k(x_1,x_2,\cdots,x_p)$, we let $t=0,x_i,1$ and $t=x_i-1$, respectively, in (\ref{4-1}) and get
\begin{equation*}
\begin{cases}
\begin{aligned}
&0=f_1^{2n_1m_1}(x_1),\\
&x_i=f_i^{2n_im_i-2x_i}(x_i)=f_i^{2n_im_i}(x_i),~2\leq i\leq p,
\end{aligned}
\end{cases}
\end{equation*}
and
\begin{equation*}
\begin{cases}
\begin{aligned}
&1=f_1^{2n_1m_1-1}(x_1)=f_1^{2n_1m_1+1}(x_1)=f_1^{-1}(x_1),\\
&x_i-1=f_i^{2n_im_i-2x_i+1}(x_i)=f_i^{2n_im_i-1}(x_i)=f_i^{-1}(x_i),~2\leq i\leq p.
\end{aligned}
\end{cases}
\end{equation*}
Therefore, we also get Eq. (\ref{4-6}) and get the same result.

Hence, the step length of the closed path is
\[k(x_1,x_2,\cdots,x_p)=2n_1m_1=2n_2m_2=\cdots=2n_pm_p=2\text{lcm}(m_1,m_2,\cdots,m_p).\]
Obviously, the step length is independent of $x_i,1\leq i\leq p$, so the step length of any closed path is the same, and the step length of open path is half that of closed path.
\end{proof}

\begin{corollary}
An arbitrary closed path has a length of $2\sqrt{n}\text{lcm}(m_1,m_2,\cdots,m_p)$.
\end{corollary}

\begin{theorem}
The number of closed paths (not open paths) is
\[C(m_1,m_2,\cdots,m_p)=2^{p-2}\frac{m_1m_2\cdots m_p}{\text{lcm}(m_1,m_2,\cdots,m_p)}-2^{p-2}.\]
\end{theorem}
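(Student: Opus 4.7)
The plan is to mirror the counting argument from the second proof of Theorem 1.3. The key observation carries over: each application of $F$ corresponds to traversing a main diagonal of some minimum unit $p$-hypercube inside the spatial grid $\mathcal{S}^p$. My strategy is to first count the total number of such diagonals, then express this total as a sum of the diagonals used by all closed paths and all open paths, using the step-length data provided by Theorem 4.8 and Definition 4.7.

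For the diagonal count, the spatial grid decomposes into $m_1 m_2 \cdots m_p$ unit $p$-hypercubes. Each such hypercube has $2^p$ vertices which pair up into $2^{p-1}$ antipodal pairs, and each antipodal pair determines a single main diagonal. Thus the grid contains exactly $2^{p-1} m_1 m_2 \cdots m_p$ diagonals in total. By Theorem 4.8, any closed path has step length $\mathcal{K} = 2\,\text{lcm}(m_1,\ldots,m_p)$ and therefore traverses $\mathcal{K}$ pairwise distinct diagonals (they must be distinct, because $\mathcal{K}$ is the minimal period). By Definition 4.7, an open path has step length $\mathcal{K}/2$, so it uses $\text{lcm}(m_1,\ldots,m_p)$ distinct diagonals.

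The crux is then to count the number of open paths. I would argue that a trajectory is an open path if and only if it passes through some vertex of $\hat{\mathcal{S}^p}$: at a vertex $v$, only one main diagonal incident to $v$ lies inside the grid (the one pointing into the interior), so the light is forced to reverse direction there; conversely, a diagonal trajectory can only reverse at a vertex. Consequently every open path has exactly two distinct vertex endpoints, namely its starting vertex $v$ and its turnaround vertex $F^{\mathcal{K}/2}(v)$, so the $2^p$ elements of $\hat{\mathcal{S}^p}$ pair up into $|\hat{\mathcal{S}^p}|/2 = 2^{p-1}$ open paths. Combining the three counts gives the identity
\begin{equation*}
C(m_1,\ldots,m_p) \cdot 2\,\text{lcm}(m_1,\ldots,m_p) + 2^{p-1} \cdot \text{lcm}(m_1,\ldots,m_p) = 2^{p-1} m_1 m_2 \cdots m_p,
\end{equation*}
which rearranges to the desired formula.

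The main obstacle will be justifying the open-path count rigorously. I would need to verify, via a case analysis near vertex coordinates parallel to Case 2 in the proof of Theorem 4.8, both that the trajectory starting from any vertex has $\mathcal{K}/2$ as its half-period to another vertex, and that no non-vertex point can serve as a turnaround point. The counting of diagonals per hypercube and per trajectory is then routine, and the final arithmetic is a one-line rearrangement.
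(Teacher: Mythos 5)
Your proposal is correct and follows essentially the same route as the paper: count the $2^{p-1}m_1m_2\cdots m_p$ unit-cell diagonals, identify the $2^{p-1}$ open paths via pairing of the $2^p$ vertices of $\hat{\mathcal{S}^p}$, and divide the remaining diagonals by the step length $2\,\mathrm{lcm}(m_1,\ldots,m_p)$ from Theorem 4.8. The extra justifications you flag (distinctness of traversed diagonals, reversal only at vertices, distinct endpoints of each open path) are in fact left implicit in the paper as well, so your write-up is, if anything, slightly more careful.
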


When $p=2$, we have
\[C(m_1,m_2)=\frac{m_1m_2}{\text{lcm}(m_1,m_2)}-1=\gcd{(m_1,m_2)}-1,\]
which is exactly Theorem 1.3.

\begin{proof}
Note that each mapping $F$ corresponds to a diagonal of a minimum unit in the spatial grid. A $m_1\times m_2\times\cdots\times m_p$ spatial grid has a total of $m_1m_2\cdots m_p$ minimum units and $2^{p-1}m_1m_2\cdots m_p$ diagonals.

The number of points in the set $\hat{\mathcal{S}^p}$ is $2^p$. An open path corresponds to two points in the set $\hat{\mathcal{S}^p}$, so the number of open paths in the spatial grid is $2^{p-1}$.

Thereore the number of closed paths is
\[\begin{aligned}
C(m_1,m_2,\cdots,m_p)=&\frac{2^{p-1}m_1m_2\cdots m_p-2^{p-1}\text{lcm}(m_1,m_2,\cdots,m_p)}{2\text{lcm}(m_1,m_2,\cdots,m_p)}\\
=&~2^{p-2}\frac{m_1m_2\cdots m_p}{\text{lcm}(m_1,m_2,\cdots,m_p)}-2^{p-2}.
\end{aligned}\]
\end{proof}

\begin{example}
When $p=3$ and $m_1=4,m_2=3,m_3=2$, we have
\[C(4,3,2)=2\times\frac{4\times3\times2}{\text{lcm}(4,3,2)}-2=2.\]
\begin{figure}[H]
\begin{tikzpicture}
\draw[line width=0.6pt] (0,0,6)--(8,0,6)--(8,0,0)--(8,4,0)--(8,4,6)--(0,4,6)--(0,4,0)--(8,4,0) (8,0,6)--(8,4,6) (0,0,6)--(0,4,6) (2,0,6)--(2,4,6)--(2,4,0) (4,0,6)--(4,4,6)--(4,4,0) (6,0,6)--(6,4,6)--(6,4,0) (0,4,2)--(8,4,2)--(8,0,2) (0,4,4)--(8,4,4)--(8,0,4) (0,2,6)--(8,2,6)--(8,2,0);
\draw[dashed,line width=0.6pt] (0,0,6)--(0,0,0)--(8,0,0) (0,0,0)--(0,4,0) (2,0,6)--(2,0,0)--(2,4,0) (4,0,6)--(4,0,0)--(4,4,0) (6,0,6)--(6,0,0)--(6,4,0) (0,4,2)--(0,0,2)--(8,0,2) (0,4,4)--(0,0,4)--(8,0,4) (0,2,6)--(0,2,0)--(8,2,0) (2,2,6)--(2,2,0) (4,2,6)--(4,2,0) (6,2,6)--(6,2,0) (2,4,2)--(2,0,2) (2,4,4)--(2,0,4) (4,4,2)--(4,0,2) (4,4,4)--(4,0,4) (6,4,2)--(6,0,2) (6,4,4)--(6,0,4) (0,2,2)--(8,2,2) (0,2,4)--(8,2,4);
\draw[line width=0.4pt, color=red] (2,0,0)--(4,2,2)--(6,4,4)--(8,2,6)--(6,0,4)--(4,2,2)--(2,4,0)--(0,2,2)--(2,0,4)--(4,2,6)--(6,4,4)
--(8,2,2)--(6,0,0)--(4,2,2)--(2,4,4)--(0,2,6)--(2,0,4)--(4,2,2)--(6,4,0)--(8,2,2)--(6,0,4)--(4,2,6)--(2,4,4)--(0,2,2)--(2,0,0);
\draw[line width=0.4pt, color=green] (0,2,0)--(2,4,2)--(4,2,4)--(6,0,6)--(8,2,4)--(6,4,2)--(4,2,0)--(2,0,2)--(0,2,4)--(2,4,6)--(4,2,4)
--(6,0,2)--(4,2,0)--(6,4,2)--(4,2,4)--(2,0,6)--(0,2,4)--(2,4,2)--(4,2,0)--(6,0,2)--(8,2,4)--(6,4,6)--(4,2,4)--(2,0,2)--(0,2,0);
\end{tikzpicture}
\vspace{0.2cm}
\caption{Two closed paths in the $4\times3\times2$ spatial grid}
\end{figure}
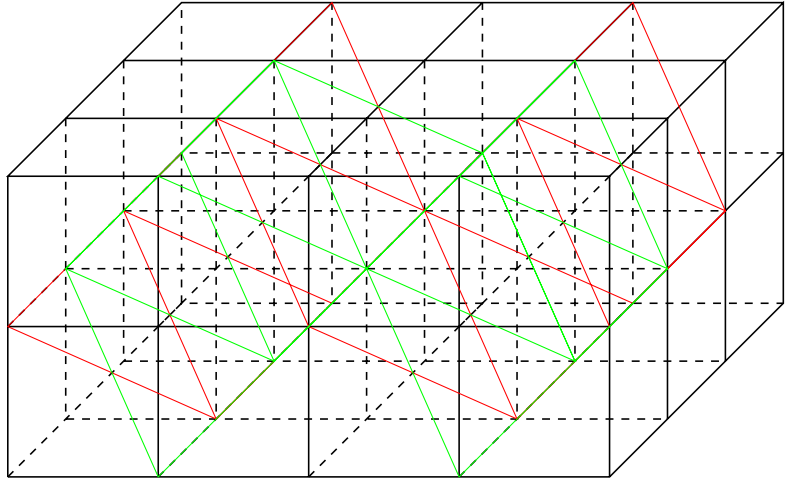
\end{example}
\vspace{-0.6cm}

\begin{corollary}
The sum of coordinates of all points (which may have the same points) that any closed path passes through is the same. Specifically,
\[\sum_{k=0}^{\mathcal{K}}f^{k}(x_i)=2n_i\bigg(\sum_{t=0}^{m_i}t\bigg)-n_im_i=n_im_i^2=m_i\text{lcm}(m_1,m_2,\cdots,m_p),\]
and
\[\sum_{k=0}^{\mathcal{K}}\sum_{i=1}^{p}f^{k}(x_i)=\bigg(\sum_{i=1}^{p}m_i\bigg)\text{lcm}(m_1,m_2,\cdots,m_p).\]
\end{corollary}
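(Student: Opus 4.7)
The plan is to reduce the statement to a one-coordinate calculation, using the fact that each coordinate $f_i$ evolves independently and periodically. By Theorem 4.8, a closed path has length $\mathcal{K}=2\,\text{lcm}(m_1,\ldots,m_p)$, and $n_i m_i = \text{lcm}(m_1,\ldots,m_p)$ by (\ref{4.1}). So within the $\mathcal{K}$ steps of a closed path, the $i$-th coordinate $f_i$ completes exactly $n_i$ full revolutions of its $2m_i$-periodic cycle, regardless of the starting value $x_i$. This reduces the problem to evaluating the sum over one period of $f_i$ and multiplying by $n_i$.

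The first step is therefore to analyze one period of $f_i$. From the description (\ref{3-1}), a full period of length $2m_i$ visits the values $0, 1, 2, \ldots, m_i, m_i-1, \ldots, 1$ in order before returning to $0$. Reading off multiplicities, the values $0$ and $m_i$ each appear exactly once per period (they are the turnaround points), while each interior value $1, 2, \ldots, m_i-1$ appears exactly twice. The sum over one period is thus
\[
0 + m_i + 2\sum_{t=1}^{m_i-1} t \;=\; 2\sum_{t=0}^{m_i} t \;-\; m_i \;=\; m_i^2,
\]
which already matches the bracketed algebraic form in the statement.

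The second step is the rescaling: multiplying the single-period sum by the number of periods $n_i$ gives
\[
\sum_{k} f_i^{k}(x_i) \;=\; n_i\bigl(2\textstyle\sum_{t=0}^{m_i} t - m_i\bigr) \;=\; n_i m_i^2 \;=\; m_i \cdot n_i m_i \;=\; m_i\,\text{lcm}(m_1,\ldots,m_p),
\]
where the last equality uses (\ref{4.1}). Note that since $f_i^{\mathcal{K}}(x_i)=x_i=f_i^{0}(x_i)$ the sum should be read over $\mathcal{K}$ distinct step indices (one full traversal of $n_i$ periods); this is the only mildly delicate point and just amounts to being consistent about not double-counting the repeated endpoint. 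Summing the identity over $i=1,\ldots,p$ then yields the second displayed formula with coefficient $\sum_{i=1}^{p} m_i$, finishing the proof.

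There is no serious obstacle here: the whole argument is a periodicity-plus-counting observation. The only thing worth verifying carefully is the multiplicity count (interior values twice per period, boundary values $0$ and $m_i$ once per period), which is immediate from the picture in Figure 5 and from the explicit descriptions (\ref{3-1}) and (\ref{3-3}).
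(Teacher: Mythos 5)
Your argument is correct and is essentially the computation the paper intends: the paper states this corollary without a separate proof, and its displayed intermediate expression $2n_i\bigl(\sum_{t=0}^{m_i}t\bigr)-n_im_i$ encodes exactly your counting (each coordinate runs through $n_i$ full $2m_i$-periods, with interior values hit twice and $0,m_i$ once per period), combined with $n_im_i=\mathrm{lcm}(m_1,\dots,m_p)$ from (\ref{4.1}). Your remark that the sum must be read over $\mathcal{K}$ distinct step indices (since $f_i^{\mathcal{K}}(x_i)=f_i^{0}(x_i)$) is the right way to interpret the paper's $\sum_{k=0}^{\mathcal{K}}$ so that the stated value $n_im_i^2$ comes out exactly.
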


In Definition 4.5, we call the chain $F^1,F^2,\cdots,F^{k}$ a closed path starting from $(x_1,x_2,\cdots,x_p)$. Now let us consider the set $\mathcal{F}$ composed of the mappings $F^k$, i.e.,
\[\mathcal{F}=\{F^1,F^2,\cdots,F^k,\cdots\}.\]

\begin{corollary}
By Theorem 4.8, the step length $\mathcal{K}$ of any closed path is an invariant and $\mathcal{K}=2\text{lcm}(m_1,m_2,\cdots,m_p)$, so the set $\mathcal{F}$ is a finite set, and its order is $\mathcal{K}$. Furthermore, the set $\mathcal{F}$ forms an Abelian group generated by $F^1$ under the composition of the mappings $F^k$. In particular,
\[\mathcal{F}\simeq\mathbb{Z}_{\mathcal{K}}.\]
Therefore, any closed path corresponds to a finitely generated Abelian group.
\end{corollary}

We can also consider Question 3.14 in $\mathcal{S}^p$.

\begin{question}
Given any two points $P_1$ and $P_2$ in the $m_1\times m_2\times\cdots\times m_p$ spatial grid, can we reach $P_2$ from $P_1$ along the diagonal of a minimum unit?
\end{question}

\begin{example}
When $m_1=m_2=m_3=1$, the point $P_1$ can reach $P_2$ but cannot reach $P_3$.
\vspace{-0.2cm}
\begin{figure}[H]
\begin{tikzpicture}
\draw[line width=0.6pt] (0,0,2)--(2,0,2)--(2,0,0)--(2,2,0)--(2,2,2)--(0,2,2)--(0,2,0)--(2,2,0) (2,0,2)--(2,2,2) (0,0,2)--(0,2,2);
\draw[dashed,line width=0.6pt] (0,0,2)--(0,0,0)--(2,0,0) (0,0,0)--(0,2,0);
\draw[line width=0.4pt, color=blue] (0,0,2)--(2,2,0);
\node (P1) at (0,0,2.6) {$P_1$};
\node (P2) at (2.4,0,2.8) {$P_3$};
\node (P3) at (2.2,2.2,0) {$P_2$};
\end{tikzpicture}
\vspace{0.2cm}
\caption{A $1\times1\times1$ spatial grid}
\end{figure}
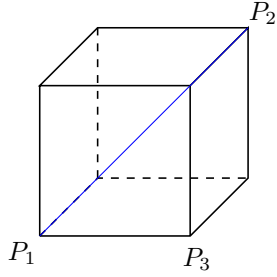
\end{example}
\vspace{-0.6cm}

To solve Question 4.13, we define a mapping $F_{s_1,s_2,\cdots,s_p}$ from the set $\mathcal{S}^p$ to the set $\mathcal{S}^p$ as follows.
\[F_{s_1,s_2,\cdots,s_p}:~\mathcal{S}^p\rightarrow \mathcal{S}^p,\quad (x_1,x_2,\cdots,x_p)\mapsto F_{s_1,s_2,\cdots,s_p}(x_1,x_2,\cdots,x_p),\]
where $s_i=0,1,i=1,2,\cdots,p$ and
\[F_{s_1,s_2,\cdots,s_p}(x_1,x_2,\cdots,x_p)=(f_1^{(-1)^{s_1}}(x_1),f_2^{(-1)^{s_2}}(x_2),\cdots,f_p^{(-1)^{s_p}}(x_p)),\]
with
\[F_{s_1,s_2,\cdots,s_p}^k(x_1,x_2,\cdots,x_p)=(f_1^{(-1)^{s_1}k}(x_1),f_2^{(-1)^{s_2}k}(x_2),\cdots,f_p^{(-1)^{s_p}k}(x_p)),~k\in\mathbb{Z},\]
where the component $f_i(x_i)$ satisfies the rule (\ref{4-1}).

Let $\mathcal{G}$ be the set composed of some composites of the mappings $F_{s_1,s_2,\cdots,s_p}^k$, i.e.,
\[\mathcal{G}=\{F_{s_{11},s_{21},\cdots,s_{p1}}^{k_1}\circ\cdots\circ F_{s_{1r},s_{2r},\cdots,s_{pr}}^{k_r}|s_{ij}=0,1,k_j\in\mathbb{Z},1\leq i\leq p,1\leq j\leq r,r\in\mathbb{N}^*\}.\]

It is easy to verify that the set $\mathcal{G}$ forms an Abelian group under the composition of the mappings $F_{s_1,s_2,\cdots,s_p}^k$. Furthermore, $\mathcal{G}$ is also finitely generated by $2^p$ elements $F_{0,0,\cdots,0},F_{0,0,\cdots,1},\cdots,F_{0,s_{1j},\cdots,s_{pj}},\cdots,F_{0,1,\cdots,1}$. Therefore,
\[\mathcal{G}=\{F_{0,0,\cdots,0}^{k_1}\circ\cdots\circ F_{0,1,\cdots,1}^{k_{2^{p-1}}}~|~k_i\in\mathbb{Z},1\leq i\leq 2^{p-1}\}.\]
By Theorem 4.8, for any $F\in\mathcal{G}$, we have
\[F^{\mathcal{K}}=F.\]
Hence, $\mathcal{G}$ is a finitely generated torsion abelian group and obviously a finite group. Then
\[\mathcal{G}\simeq\mathbb{Z}_{\mathcal{K}}^{2^{p-1}}=\mathbb{Z}_{\mathcal{K}}\oplus\cdots\oplus\mathbb{Z}_{\mathcal{K}}~~(2^{p-1}~\text{summands}).\]

Suppose that $F\in\mathcal{G}$ and $P\in\mathcal{S}^p$. An operation of $\mathcal{G}$ on $\mathcal{S}^p$ is given by the mapping
\[\mathcal{G}\times\mathcal{S}^p\rightarrow\mathcal{S}^p,\quad (F,P)\mapsto F(P).\]

We now prove that the set $\mathcal{S}^p$ is divided into $2^{p-1}$ orbits. By the definition of $f^{(-1)^{s_i}}_i(x_i)$, we have
\[f^{(-1)^{s_i}}_i(x_i)\equiv x_i+1\pmod{2},\]
thus
\begin{equation}
f^{(-1)^{s_i}}_i(x_i)+f^{(-1)^{s_j}}_j(x_j)\equiv x_i+x_j\pmod{2}                                            \label{4-7}
\end{equation}
holds for all $s_i,s_j=0,1,1\leq i,j\leq p$.

For any integer $a$, we introduce the symbol $[a]_2\in\{0,1\}$ such that
\[a\equiv[a]_2\pmod{2}.\]
Using the symbol $[a]_2$, we give the concept of the index of point $P\in\mathcal{S}^p$.

\begin{definition}
The index of point $P=(x_1,x_2,\cdots,x_p)\in\mathcal{S}^p$ is defined as
\[I(P)=([x_1+x_2]_2,[x_1+x_3]_2,\cdots,[x_1+x_p]_2).\]
\end{definition}

\begin{theorem}
The points $P_1=(x_1,x_2,\cdots,x_p)$ and $P_2=(y_1,y_2,\cdots,y_p)$ are in the same orbit if and only if the indexes of $P_1$ and $P_2$ are the same, i.e.,
\begin{equation}
I(P_1)=I(P_2).                                                                                           \label{4-8}
\end{equation}
\end{theorem}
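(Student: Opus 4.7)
The plan is a two-direction proof. For necessity (same orbit $\Rightarrow$ same index), I will show that $I$ is a $\mathcal{G}$-invariant. Applying any $F_{s_1,\ldots,s_p}$ flips the parity of every coordinate, which is essentially the content of (\ref{4-7}); more generally $F_{s_1,\ldots,s_p}^k$ shifts each $x_i$ by $k\pmod 2$, so $x_1+x_i\pmod 2$ is unchanged. Since every element of $\mathcal{G}$ is a composition of such maps, every component of $I(P)$ is preserved under the $\mathcal{G}$-action, giving the ``only if" direction.

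For sufficiency, assume $I(P_1)=I(P_2)$. Rearranging $[x_1+x_i]_2=[y_1+y_i]_2$ gives $x_i+y_i\equiv x_1+y_1\pmod 2$ for every $i$, so the parities $x_i+y_i\pmod 2$ agree across all $i$; denote this common value by $\epsilon$. Using (\ref{4-1}) with $t=y_i$, for each $i$ I pick an integer $u_i$ satisfying $f_i^{u_i}(x_i)=y_i$; any such $u_i$ has the form $2n_im_i-x_i\pm y_i$ and hence parity $x_i+y_i\equiv \epsilon\pmod 2$. I therefore obtain a tuple $(u_1,\ldots,u_p)\in\mathbb{Z}^p$ with coordinates of a common parity whose componentwise $f$-action sends $P_1$ to $P_2$.

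It remains to realize this componentwise map as a single element of $\mathcal{G}$. A composition $F=F_{0,s_{21},\ldots,s_{p1}}^{k_1}\circ\cdots\circ F_{0,s_{2r},\ldots,s_{pr}}^{k_r}$ acts on the $i$-th coordinate as $f_i^{U_i}$ with $U_i=\sum_{j=1}^{r}(-1)^{s_{ij}}k_j$ (setting $s_{1j}:=0$). Thus the realizable exponent tuples form the integer lattice $L\subset\mathbb{Z}^p$ spanned by the $2^{p-1}$ sign vectors $v_{s_2,\ldots,s_p}=(1,(-1)^{s_2},\ldots,(-1)^{s_p})$. Clearly $L\subseteq L_0:=\{u\in\mathbb{Z}^p:u_i\equiv u_j\pmod 2\ \forall i,j\}$. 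For the reverse inclusion, $v_{0,\ldots,0}=(1,1,\ldots,1)\in L$; for each $i\in\{2,\ldots,p\}$ the difference $v_{0,\ldots,0}-v^{(i)}$ equals $2e_i$, where $v^{(i)}$ is the generator with $s_i=1$ and all other $s_j=0$; and $v_{0,\ldots,0}+v_{1,1,\ldots,1}=2e_1$. Together these yield $(1,\ldots,1)$ and every $2e_i$, hence all of $L_0=\mathbb{Z}(1,\ldots,1)+2\mathbb{Z}^p$, so $L=L_0$. Since our tuple $(u_1,\ldots,u_p)$ has coordinates of a common parity, it lies in $L=L_0$, and the associated $F\in\mathcal{G}$ satisfies $F(P_1)=P_2$.

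The main obstacle I anticipate is the lattice identification $L=L_0$: the containment $L\subseteq L_0$ is immediate from the $\pm 1$ structure of the generators, but the reverse requires the explicit combinations above to produce each $2e_i$, with $2e_1$ in particular requiring the ``all-ones-sign" generator $v_{1,\ldots,1}$. Once $L=L_0$ is established, the remainder of the argument reduces to parity bookkeeping together with direct appeals to (\ref{4-1}) and (\ref{4-7}).
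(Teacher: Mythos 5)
Your ``only if'' direction is correct and is essentially the paper's argument: every generator shifts the parity of every coordinate by $1$, so by (\ref{4-7}) all pairwise parities $[x_i+x_j]_2$, and in particular the index, are constant on orbits. Note that the paper's printed proof in fact stops there (it only adds that index equality is equivalent to equality of all pairwise parities), so your attempt to prove the ``if'' direction does genuinely more work than the paper does; within that attempt, the parity bookkeeping and the lattice identification $L=\mathbb{Z}(1,\dots,1)+2\mathbb{Z}^p$ are correct.

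However, your sufficiency argument has a genuine gap: the step asserting that $F_{0,s_{21},\dots,s_{p1}}^{k_1}\circ\cdots\circ F_{0,s_{2r},\dots,s_{pr}}^{k_r}$ acts on the $i$-th coordinate as $f_i^{U_i}$ with $U_i=\sum_j(-1)^{s_{ij}}k_j$ is false. By (\ref{4-1}), $f_i^{k}(x_i)$ is the reflected (``folded'') value of $x_i+k$, and a fold reverses the direction of all subsequent shifts, so $f_i^{a}(f_i^{b}(x_i))\neq f_i^{a+b}(x_i)$ in general. Concretely, take $p=2$, $m_1=m_2=2$, $P_1=(0,0)$, $P_2=(0,2)$: your recipe gives $(u_1,u_2)=(0,2)=1\cdot(1,1)+(-1)\cdot(1,-1)$, but $(F_{0,0}^{1}\circ F_{0,1}^{-1})(0,0)=F_{0,0}^{1}(1,1)=(2,2)\neq(0,2)$, while the opposite order does give $(0,2)$; so the claimed additivity (and the commutativity it presupposes) fails pointwise, and exhibiting $(u_1,\dots,u_p)\in L$ does not by itself produce an element of $\mathcal{G}$ carrying $P_1$ to $P_2$. (The paper makes the same implicit assumption when it asserts $\mathcal{G}\simeq\mathbb{Z}_{\mathcal{K}}^{2^{p-1}}$, but your proof of this theorem cannot rest on that unproved identity.) The conclusion itself is true and your argument can be repaired by building an explicit chain of single diagonal steps instead of one algebraic composite: if the common parity $\epsilon$ of $x_i+y_i$ is odd, make one arbitrary step to match all coordinate parities; then adjust the coordinates one at a time using pairs of steps in which a chosen coordinate moves by $\pm2$ while every other coordinate moves $+1$ then $-1$ (or $-1$ then $+1$ when it sits at a wall, which is possible since $m_i\geq1$). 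This fold-aware construction is the missing content of the ``if'' direction.
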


\begin{proof}
If $P_1$ and $P_2$ are in the same orbit, from (\ref{4-7}), we have
\begin{equation}
[x_i+x_j]_2=[y_i+y_j]_2,~1\leq i,j\leq p.                                                                \label{4-9}
\end{equation}
Note that for any $1\leq i,j,k\leq p$, if
\[[x_i+x_j]_2=[y_i+y_j]_2\quad \text{and}\quad [x_i+x_k]_2=[y_i+y_k]_2,\]
then
\[\begin{split}
[x_j+x_k]_2&=[x_j+x_k+2x_i]_2\\
&=[(x_i+x_j)+(x_i+x_k)]_2\\
&=[x_i+x_j]_2+[x_i+x_k]_2\\
&=[y_i+y_j]_2+[y_i+y_k]_2\\
&=[2y_i+y_j+y_k]_2\\
&=[y_j+y_k]_2.
\end{split}\]
Therefore, (\ref{4-8}) and (\ref{4-9}) are equivalent.
\end{proof}

Since $[x_i+x_j]_2\in\{0,1\}$, there are $2^{p-1}$ different $I(P)$.

\begin{theorem}
The set $\mathcal{S}^p$ is divided into $2^{p-1}$ orbits. The points in each orbit have the same index. Denote the orbit with index $I(P)=(\delta_1,\delta_2,\cdots,\delta_{p-1})$ by
\[S_{(\delta_1,\delta_2,\cdots,\delta_{p-1})}=\{P=(x_1,x_2,\cdots,x_p)~|~I(P)=(\delta_1,\delta_2,\cdots,\delta_{p-1})\}.\]
We have
\[\begin{split}
|S_{(\delta_1,\delta_2,\cdots,\delta_{p-1})}|=&~\prod_{i=1}^{p}\frac{(m_i+1)+(-1)^{[\delta_{i-1}]_2}[m_i+1]_2}{2}\\
&+\prod_{i=1}^{p}\frac{(m_i+1)+(-1)^{[\delta_{i-1}+1]_2}[m_i+1]_2}{2},
\end{split}\]
where $\delta_0=0$.
\end{theorem}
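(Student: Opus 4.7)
The plan is to reduce the orbit-size count to a straightforward parity-constrained enumeration, using Theorem 4.16 to characterize the orbit intrinsically. By that theorem, a point $(x_1,\dots,x_p)\in\mathcal{S}^p$ lies in $S_{(\delta_1,\dots,\delta_{p-1})}$ if and only if $[x_1+x_{i+1}]_2=\delta_i$ for every $1\le i\le p-1$, i.e.\ $x_{i+1}\equiv x_1+\delta_i\pmod 2$. Adopting the convention $\delta_0=0$ from the statement, this is the uniform system $x_i\equiv x_1+\delta_{i-1}\pmod 2$ for $1\le i\le p$.

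First I would split on the parity of $x_1$. If $x_1$ is even, the congruences become $x_i\equiv\delta_{i-1}\pmod 2$ for every $i$; if $x_1$ is odd, they become $x_i\equiv\delta_{i-1}+1\pmod 2$. Because there are no other constraints between the coordinates, the count factors as a sum of two products
$$|S_{(\delta_1,\dots,\delta_{p-1})}|=\prod_{i=1}^{p}N_{[\delta_{i-1}]_2}(m_i)\;+\;\prod_{i=1}^{p}N_{[\delta_{i-1}+1]_2}(m_i),$$
where I set $N_\varepsilon(m):=\#\{y\in\mathbb{Z}:0\le y\le m,\ y\equiv\varepsilon\pmod 2\}$ for $\varepsilon\in\{0,1\}$.

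Next I would evaluate $N_\varepsilon(m)$ in closed form. A direct count gives $N_0(m)+N_1(m)=m+1$, and $N_0(m)-N_1(m)$ equals $1$ when $m$ is even and $0$ when $m$ is odd. Since by definition $[m+1]_2=1$ exactly when $m$ is even and $[m+1]_2=0$ when $m$ is odd, this yields the compact formula
$$N_\varepsilon(m)=\frac{(m+1)+(-1)^{\varepsilon}[m+1]_2}{2}.$$
Substituting this into each factor of the two products above reproduces verbatim the expression in the theorem.

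The argument is essentially a routine parity count once Theorem 4.16 is available; no essential obstacle is expected. The only point requiring care is the bookkeeping of the convention $\delta_0=0$, which lets both parity branches of $x_1$ be absorbed into a single indexing scheme and explains why the two products in the final formula differ only by flipping every exponent $[\delta_{i-1}]_2$ to $[\delta_{i-1}+1]_2$.
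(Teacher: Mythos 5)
Your argument is correct: by Theorem 4.16 the orbit with index $(\delta_1,\dots,\delta_{p-1})$ is exactly the set of points satisfying $x_i\equiv x_1+\delta_{i-1}\pmod 2$ for all $i$ (with $\delta_0=0$), splitting on the parity of $x_1$ decouples the coordinates, and your closed form $N_\varepsilon(m)=\tfrac{(m+1)+(-1)^{\varepsilon}[m+1]_2}{2}$ for the number of admissible values of each coordinate is right, so the two products sum to the stated formula (e.g.\ for $p=2$, $m_1=6$, $m_2=4$ it gives $18$ and $17$, consistent with Theorem 3.16). The paper states this theorem without any proof, so there is nothing to compare against; your parity count supplies precisely the missing justification, and the only small point worth making explicit is that all $2^{p-1}$ index values are attained (each factor $N_\varepsilon(m_i)\ge 1$ since $m_i\ge 1$), so together with Theorem 4.16 there are exactly $2^{p-1}$ orbits.
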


When $p=2$, we get the following two orbits
\[\begin{split}
S_{(0)}&=\{P=(x_1,x_2)~|~I(P)=(0)\},\\
S_{(1)}&=\{P=(x_1,x_2)~|~I(P)=(1)\}.
\end{split}\]
This is exactly the orbits $S_e$ and $S_o$ in Theorem 3.16.

Now the answer to Question 4.13 is clear. The point $P_1$ can reach $P_2$ if and only if $P_1$ and $P_2$ are in the same orbit.

\section{The third proof of Theorem 1.3}
The third proof of Theorem 1.3 is obtained on the basis of the second proof by introducing circular sequences and determining the general term formula of circular sequences. At the same time, we consider some combinatorial properties of the mapping $f$ defined in Section 3. The definition of the mapping $f$ is given by the processes (\ref{3-1}) and (\ref{3-3}), rewrite it as
\begin{equation}
\begin{split}
t&\mapsto f(t)=t+1\mapsto f(t+1)=t+2\mapsto\cdots\\
&\mapsto f(m-1)=m\mapsto f(m)=m-1\\
&\mapsto f(m-1)=m-2\mapsto\cdots\mapsto f(1)=0\\
&\mapsto f(0)=1\mapsto f(1)=2\mapsto\cdots,
\end{split}
\end{equation}
and
\begin{equation}
\begin{split}
t&\mapsto f^{-1}(t)=t-1\mapsto f^{-1}(t-1)=t-2\mapsto\cdots\\
&\mapsto f^{-1}(2)=1\mapsto f^{-1}(1)=0\mapsto f^{-1}(0)=1\\
&\mapsto\cdots\mapsto f^{-1}(m-1)=m\mapsto f^{-1}(m)=m-1\\
&\mapsto f^{-1}(m-1)=m-2\mapsto\cdots,
\end{split}
\end{equation}
where $0\leq t\leq m$.

According to the mapping $f$, we give the definition of circular sequences.
\begin{definition}
A positive circular sequence $a^+(n,t,m)$ with the first term $t$ and the height $m$ is defined as
\[a^+(n,t,m)=f^n(t),~~n=0,1,2,\cdots,\]
with $f^0(t)=t$.

A negative circular sequence $a^-(n,t,m)$ with the first term $t$ and the height $m$ is defined as
\[a^-(n,t,m)=f^{-n}(t),~~n=0,1,2,\cdots,\]
with $f^0(t)=t$.
\end{definition}
Obviously, the circular sequence is a periodic sequence with a period of $2m$. When $t=0$ or $t=m$, we have
\[a^+(n,0,m)=a^-(n,0,m)\quad \text{and}\quad a^+(n,m,m)=a^-(n,m,m).\]

From some calculations, we have
\begin{theorem}
\[a^+(n,t,m)=\begin{cases}
n+t,\quad 0\leq n\leq m-t,\\
2m-t-n,\quad m-t+1\leq n\leq 2m-t,\\
n-(2m-t),\quad 2m-t+1\leq n\leq 2m-1,
\end{cases}\]
and
\[a^-(n,t,m)=\begin{cases}
t-n,\quad 0\leq n\leq t-1,\\
n-t,\quad t\leq n\leq m+t,\\
2m+t-n,\quad m+t+1\leq n\leq 2m-1.
\end{cases}\]
Since the period of the circular sequence is $2m$, by induction, we obtain
\[a^+(n,t,m)=\begin{cases}
	n-(2km-t),\quad 2km\leq n\leq (2k+1)m-t,\\
	((2k+2)m-t)-n,\quad (2k+1)m-t+1\leq n\leq (2k+2)m-t,\\
	n-((2k+2)m-t),\quad (2k+2)m-t+1\leq n\leq (2k+2)m-1,
\end{cases}\]
and
\[a^-(n,t,m)=\begin{cases}
	(2km+t)-n,\quad 2km\leq n\leq 2km+t-1,\\
	n-(2km+t),\quad 2km+t\leq n\leq (2k+1)m+t,\\
	((2k+2)m+t)-n,\quad (2k+1)m+t+1\leq n\leq (2k+2)m-1,
\end{cases}\]
where $k$ is a non-negative integer.
\end{theorem}

\begin{example}
When $t=3,m=6$, the positive circular sequence $a^+(n,3,6)$ is given by
\[3,4,5,6,5,4,3,2,1,0,1,2,3,\cdots,\]
and the negative circular sequence $a^-(n,3,6)$ is given by
\[3,2,1,0,1,2,3,4,5,6,5,4,3,\cdots.\]
\end{example}

As the third proof of Theorem 1.3, we only show that Theorems 3.4 and 4.3 can be obtained from circular sequences. We take Theorem 3.4 as an example.

\begin{proof}[Another proof of Theorem 3.4.]
	Without loss of generality, we only consider positive circular sequences. For any point $P_1=(x_1,x_2)\in\mathcal{S}^2$, if the light starting from $P_1=(x_1,x_2)$ will pass through $P_2=(y_1,y_2)$, there is an integer $n$ such that
	\[(y_1,y_2)=(a^+(n,x_1,m_1),a^+(n,x_2,m_2)).\]
Taking $a^+(n,x_1,m_1)=n-(2k_1m_1-x_1)$ and $a^+(n,x_2,m_2)=n-(2k_2m_2-x_2)$ as an example, we have
	\begin{equation*}
		\begin{cases}
			\begin{aligned}
				&y_1=n-(2k_1m_1-x_1),\\
				&y_2=n-(2k_2m_2-x_2).
			\end{aligned}
		\end{cases}
	\end{equation*}
Eliminating $n$, we then obtain
\[2k_1m_1-2k_2m_2=x_1-x_2-y_1+y_2.\]
This is exactly one of the equations in Eq. (\ref{3-4}). For other values of $a^+(n,x_1,m_1)$ and $a^+(n,x_2,m_2)$, we can obtain similar results.
\end{proof}

Now, let us consider the generating functions of the circular sequences.
\[\begin{split}
g^+(x,t,m)=\sum_{n=0}^{\infty}a^+(n,t,m)x^n\quad \text{and}\quad g^-(x,t,m)&=\sum_{n=0}^{\infty}a^-(n,t,m)x^n.
\end{split}\]

\begin{theorem}
Let
\[f^+(x,t,m)=\sum_{n=0}^{2m-1}a^+(n,t,m)x^n\quad \text{and}\quad f^-(x,t,m)=\sum_{n=0}^{2m-1}a^-(n,t,m)x^n,\]
then
\[g^+(x,t,m)=\frac{f^+(x,t,m)}{1-x^{2m}}\quad \text{and}\quad g^-(x,t,m)=\frac{f^-(x,t,m)}{1-x^{2m}}.\]
\end{theorem}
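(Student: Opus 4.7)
The plan is to exploit the $2m$-periodicity of the circular sequences, which was noted just before the statement (and is immediate from Theorem 5.2, or equivalently from the defining relation \eqref{3-2} with $n$ replaced by $n+1$). Once periodicity is in hand, the generating function splits into blocks of length $2m$ and collapses into a geometric series.

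Concretely, I would first record the identity $a^{\pm}(n+2m,t,m)=a^{\pm}(n,t,m)$ for all $n\ge 0$. For $a^+$, this follows either by reading off the three-piece formula in Theorem 5.2 and observing that shifting $i$ by $2m$ lands in the same piece with the same output, or more cleanly by applying \eqref{3-2} which gives $f^{2m}(t)=t$, hence $f^{n+2m}(t)=f^n(t)$. The argument for $a^-$ is identical.

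Next I would write
\[
g^+(x,t,m)=\sum_{n=0}^{\infty}a^+(n,t,m)x^n=\sum_{k=0}^{\infty}\sum_{r=0}^{2m-1}a^+(2mk+r,t,m)\,x^{2mk+r},
\]
use periodicity to replace $a^+(2mk+r,t,m)$ by $a^+(r,t,m)$, pull the $x^{2mk}$ factor out of the inner sum, and recognize the inner sum as $f^+(x,t,m)$. This yields
\[
g^+(x,t,m)=\Bigl(\sum_{k=0}^{\infty}x^{2mk}\Bigr)f^+(x,t,m)=\frac{f^+(x,t,m)}{1-x^{2m}},
\]
valid as a formal power series identity (or for $|x|<1$ analytically). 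The identical manipulation, with $a^-$ in place of $a^+$, yields the second formula.

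There is no real obstacle here: the only nontrivial point is the periodicity, and that is essentially given by Theorem 5.2. I would simply state periodicity as a one-line corollary of that theorem (or of \eqref{3-2}) and then present the block decomposition above as the proof. No case analysis on whether $t=0$ or $t=m$ is needed, since periodicity holds uniformly.
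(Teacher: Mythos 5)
Your proposal is correct and follows essentially the same route as the paper: split the series $\sum_{n\ge 0}a^{\pm}(n,t,m)x^n$ into residue classes modulo $2m$, invoke the $2m$-periodicity of the circular sequence, and sum the resulting geometric series $\sum_k (x^{2m})^k$ to obtain $f^{\pm}(x,t,m)/(1-x^{2m})$. Your explicit justification of the periodicity (via $f^{2m}(t)=t$ or Theorem 5.2) is a small tidying of what the paper takes for granted, not a different argument.
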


\begin{proof}
We only consider the case of $g^+(t,m,x)$. In fact,
\[\begin{split}
g^+(x,t,m)=&~\sum_{n=0}^{\infty}a^+(n,t,m)x^n\\
=&~\sum_{k=0}^{\infty}a^+(2mk,t,m)x^{2mk}+\sum_{k=0}^{\infty}a^+(2mk+1,t,m)x^{2mk+1}\\
&+\cdots+\sum_{k=0}^{\infty}a^+(2mk+2m-1,t,m)x^{2mk+2m-1}\\
=&~a^+(2mk,t,m)\sum_{k=0}^{\infty}(x^{2m})^k+a^+(2mk+1,t,m)x\sum_{k=0}^{\infty}(x^{2m})^k\\
&+\cdots+a^+(2mk+2m-1,t,m)x^{2m-1}\sum_{k=0}^{\infty}(x^{2m})^k\\
=&~\frac{a^+(0,t,m)+a^+(1,t,m)x+\cdots+a^+(2m-1,t,m)x^{2m-1}}{1-x^{2m}}\\
=&~\frac{\sum\limits_{n=0}^{2m-1}a^+(n,t,m)x^n}{1-x^{2m}}.
\end{split}\]
\end{proof}

In order to get the expressions of $f^+(x,t,m)$ and $f^-(x,t,m)$, we show that the following lemma.
\begin{lemma}
Let
\[F(x,t,n)=tx^{t-1}+(t+1)x^t+\cdots+(n-1)x^{n-2}+nx^{n-1},~~t\geq1,n\geq1,\]
then
\[F(x,t,n)=\frac{tx^{t-1}-(t-1)x^t-(n+1)x^n+nx^{n+1}}{(1-x)^2}.\]

When $t=1$, we have
\[F(x,1,n)=\frac{1-(n+1)x^n+nx^{n+1}}{(1-x)^2}.\]
\end{lemma}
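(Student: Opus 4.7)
The plan is to recognize $F(x,t,n)$ as the derivative of a finite geometric series, so the identity reduces to differentiating a known closed form. Concretely, I would write
\[
F(x,t,n)=\sum_{k=t}^{n} kx^{k-1}=\frac{d}{dx}\sum_{k=t}^{n} x^{k},
\]
and then use the elementary identity $\sum_{k=t}^{n}x^{k}=\dfrac{x^{t}-x^{n+1}}{1-x}$. Differentiating the right-hand side with the quotient rule yields
\[
\frac{\bigl(tx^{t-1}-(n+1)x^{n}\bigr)(1-x)+\bigl(x^{t}-x^{n+1}\bigr)}{(1-x)^{2}},
\]
and a short expansion of the numerator collapses it to $tx^{t-1}-(t-1)x^{t}-(n+1)x^{n}+nx^{n+1}$, which is the claimed formula. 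The special case $t=1$ is then obtained simply by substitution.

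If one prefers to avoid calculus, the same identity can be obtained by a two-step telescoping. First, compute
\[
(1-x)F(x,t,n)=tx^{t-1}+x^{t}+x^{t+1}+\cdots+x^{n-1}-nx^{n},
\]
where every middle coefficient collapses to $1$ because consecutive terms of $F$ differ by exactly $x^{k-1}$. Using the geometric sum for $x^{t}+\cdots+x^{n-1}$ and multiplying through by $(1-x)$ a second time gives
\[
(1-x)^{2}F(x,t,n)=tx^{t-1}(1-x)+(x^{t}-x^{n})-nx^{n}(1-x),
\]
which, after collecting like powers of $x$, becomes the desired numerator.

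I do not anticipate any serious obstacle here: the identity is a routine generating-function computation, and the only mild bookkeeping issue is verifying that the boundary coefficients (the $tx^{t-1}$ term at the low end and the $nx^{n-1}$ term at the high end) combine correctly with the telescoped middle part, which can be checked directly. The case $t=1$ statement then follows by plugging $t=1$ into the general formula, since $(t-1)x^{t}=0$ in that case, leaving $\dfrac{1-(n+1)x^{n}+nx^{n+1}}{(1-x)^{2}}$.
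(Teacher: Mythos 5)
Your first argument is essentially the paper's own proof: the paper integrates $F(u,t,n)$ to obtain the geometric sum $x^t+\cdots+x^n=\frac{x^t-x^{n+1}}{1-x}$ and then differentiates the closed form back, which is exactly your ``$F=\frac{d}{dx}\sum_{k=t}^{n}x^k$'' computation, and your quotient-rule expansion of the numerator is correct. Your calculus-free telescoping variant via $(1-x)^2F(x,t,n)$ is also correct, but it is only a minor algebraic repackaging of the same identity, so the proposal stands as written.
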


\begin{proof}
\[\begin{split}
\int_0^xF(u,t,n)du&=\int_0^x\left(tu^{t-1}+(t+1)u^t+\cdots+(n-1)u^{n-2}+nu^{n-1}\right)du\\
&=x^t+x^{t+1}+\cdots+x^n\\
&=\frac{x^t(1-x^{n-t+1})}{1-x}.
\end{split}\]
Then
\[F(x,t,n)=\frac{tx^{t-1}-(t-1)x^t-(n+1)x^n+nx^{n+1}}{(1-x)^2}.\]
\end{proof}

\begin{theorem}
For any $t\geq0,m\geq1$, we have
\[\begin{split}
f^+(x,t,m)=\frac{1-x^m}{1-x}\left(\frac{x(1-x^{m-t})}{1-x}-\frac{x^{m-t+1}(1-x^{t-1})}{1-x}+(t-1)x^m+t\right),
\end{split}\]
and
\[\begin{split}
f^-(x,t,m)=\frac{1-x^m}{1-x}\left(\frac{x^{t+1}(1-x^{m-t})}{1-x}-\frac{x(1-x^{t})}{1-x}+tx^m+t\right).
\end{split}\]
\end{theorem}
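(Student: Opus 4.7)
The natural strategy is to evaluate $f^+(x,t,m)$ directly by splitting the sum $\sum_{i=0}^{2m-1}a^+(i,t,m)x^i$ along the three cases given by Theorem 5.2, namely over the intervals $[0,m-t]$, $[m-t+1,2m-t]$, and $[2m-t+1,2m-1]$. On each of these intervals the summand is linear in $i$, so after separating constant and linear parts and (in the second and third pieces) factoring out a power of $x$ and shifting the index, each partial sum reduces to a combination of a finite geometric series $\frac{1-x^N}{1-x}$ and an expression of the form $x\cdot F(x,1,N)$ to which Lemma 5.4 applies. This produces a closed form for $f^+(x,t,m)$ as a rational function with denominator $(1-x)^2$. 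The case of $f^-(x,t,m)$ is entirely parallel, with the three intervals $[0,t-1]$, $[t,m+t]$, and $[m+t+1,2m-1]$ arising from the corresponding piecewise formula for $a^-$.

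The main obstacle is then the algebraic simplification: the three rational expressions over $(1-x)^2$ must be combined and repackaged to exhibit the factor $\frac{1-x^m}{1-x}$ appearing in the target. The identity $1-x^m=(1-x^{m-t+1})+x^{m-t+1}(1-x^{t-1})$, together with repeated application of $1-x^N=(1-x)(1+x+\cdots+x^{N-1})$, provides the required regrouping, but the bookkeeping is lengthy and error-prone.

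A cleaner alternative route that I would actually carry out is to verify the identity by comparing coefficients. Writing $\frac{1-x^m}{1-x}=1+x+\cdots+x^{m-1}$ and expanding the bracket, the right-hand side of the claimed formula becomes the convolution $(1+x+\cdots+x^{m-1})\cdot g(x)$ where
\[
g(x)=t+\sum_{k=1}^{m-t}x^k-\sum_{k=m-t+1}^{m-1}x^k+(t-1)x^m.
\]
Consequently the coefficient of $x^n$ on the right equals the window sum $\sum_{j=\max(0,\,n-m+1)}^{\min(m,\,n)}g_j$ of at most $m$ consecutive coefficients of $g$ (truncated at the boundaries). A direct case analysis on $n$ in each of the three intervals $[0,m-t]$, $[m-t+1,2m-t]$, $[2m-t+1,2m-1]$ then reproduces the piecewise values of $a^+(n,t,m)$ from Theorem 5.2 by elementary counting of how many $+1$'s, $-1$'s, and the boundary coefficients $t$ and $t-1$ fall into the current window. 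This establishes the formula for $f^+$, and the formula for $f^-$ follows by the same window-sum argument with the analogous polynomial associated to $a^-(i,t,m)$.
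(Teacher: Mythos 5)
Your opening paragraph is, in substance, the paper's own proof: the paper writes $f^{\pm}(x,t,m)$ as three blocks corresponding to the three branches of Theorem 5.2, factors suitable powers of $x$ (and of $x^{-1}$) out of each block, applies the closed form of Lemma 5.5 for $F(x,t,n)$, adds the three fractions over $(1-x)^2$, and refactors to exhibit $\frac{1-x^m}{1-x}$. The route you actually commit to is genuinely different: you expand the right-hand side as the convolution $(1+x+\cdots+x^{m-1})\,g(x)$ and check that the window sums $\sum_{j=\max(0,n-m+1)}^{\min(m,n)}g_j$ reproduce the piecewise values $a^{+}(n,t,m)$ of Theorem 5.2; this is a verification rather than a derivation, it bypasses Lemma 5.5 and all the rational-function bookkeeping, and the case analysis on $n$ is elementary counting (I checked it on the three ranges $[0,m-t]$, $[m-t+1,2m-t]$, $[2m-t+1,2m-1]$, and the analogous polynomial $h(x)=t-\sum_{k=1}^{t}x^k+\sum_{k=t+1}^{m}x^k+tx^m$ does the job for $f^-$). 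What the paper's computation buys is that it produces the closed form rather than presupposing it; what yours buys is a shorter, less error-prone argument whose only input is Theorem 5.2.

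One concrete caveat: your explicit $g(x)$ is valid only for $t\ge 1$. At $t=0$ the term $\frac{x^{m-t+1}(1-x^{t-1})}{1-x}$ is not an empty geometric sum but equals $-x^{m}$, so the bracket is $\sum_{k=1}^{m}x^k$ (i.e.\ $g_m=1$), whereas your formula gives $g_m=(t-1)+1=0$; carried out literally at $t=0$ the coefficient check would then fail at the top of the range (e.g.\ the coefficient of $x^{2m-1}$ must be $a^+(2m-1,0,m)=1$). Since the theorem is asserted for all $t\ge 0$, you need to treat $t=0$ separately (with the corrected bracket the same window count goes through immediately); the case $t=m$, where some of your index ranges are empty, works as written. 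This is a boundary repair, not a flaw in the method — note the paper's own expansion likewise implicitly assumes $t\ge 1$ in its third block.
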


\begin{proof}
\[\begin{split}
f^+(x,t,m)=&~t+(t+1)x+(t+2)x^2+\cdots+(m-1)x^{m-t-1}\\
&+mx^{m-t}+(m-1)x^{m-t+1}+\cdots+2x^{2m-t-2}+x^{2m-t-1}\\
&+x^{2m-t+1}+2x^{2m-t+2}+\cdots+(t-2)x^{2m-2}+(t-1)x^{2m-1}\\
=&~\frac{1}{x^{t-1}}\left(tx^{t-1}+(t+1)x^{t}+(t+2)x^{t+1}+\cdots+(m-1)x^{m-2}\right)\\
&+x^{2m-t-1}\left(1+2(x^{-1})+\cdots+(m-1)(x^{-1})^{m-2}+m(x^{-1})^{m-1}\right)\\
&+x^{2m-t+1}\left(1+2x+3x^2+\cdots+(t-2)x^{t-3}+(t-1)x^{t-2}\right)\\
=&~\frac{t+(1-t)x+(m-1)x^{m-t+1}-mx^{m-t}}{(1-x)^2}\\
&+\frac{x^{2m-t+1}-(m+1)x^{m-t+1}+mx^{m-t}}{(1-x)^2}\\
&+\frac{x^{2m-t+1}-tx^{2m}+(t-1)x^{2m+1}}{(1-x)^2}\\
=&~\frac{(t-1)x^{2m+1}-tx^{2m}+2x^{2m-t+1}-2x^{m-t+1}+(1-t)x+t}{(1-x)^2}\\
=&~\frac{1-x^m}{1-x}\left(\frac{x(1-x^{m-t})}{1-x}-\frac{x^{m-t+1}(1-x^{t-1})}{1-x}+(t-1)x^m+t\right).\\
\end{split}\]
\[\begin{split}
f^-(x,t,m)=&~t+(t-1)x+(t-2)x^2+\cdots+2x^{t-2}+x^{t-1}\\
&+x^{t+1}+2x^{t+2}+3x^{t+3}+\cdots+(m-1)x^{m+t-1}\\
&+mx^{m+t}+(m-1)x^{m+t+1}+\cdots+(t+2)x^{2m-2}+(t+1)x^{2m-1}\\
=&~x^{t-1}\left(1+2(x^{-1})+3(x^{-1})^2+\cdots+(t-1)(x^{-1})^{t-2}+t(x^{-1})^{t-1}\right)\\
&+x^{t+1}\left(1+2x+3x^2+\cdots+(m-1)x^{m-2}\right)\\
&+x^{2m+t-1}\left((t+1)(x^{-1})^t+(t+2)(x^{-1})^{t+1}+\cdots+m(x^{-1})^{m-1}\right)\\
=&~\frac{t-(1+t)x+x^{t+1}}{(1-x)^2}\\
&+\frac{x^{t+1}-mx^{m+t}+(m-1)x^{m+t+1}}{(1-x)^2}\\
&+\frac{mx^{m+t}-(m+1)x^{m+t+1}-tx^{2m}+(t+1)x^{2m+1}}{(1-x)^2}\\
=&~\frac{(t+1)x^{2m+1}-tx^{2m}-2x^{m+t+1}+2x^{t+1}-(1+t)x+t}{(1-x)^2}\\
=&~\frac{1-x^m}{1-x}\left(\frac{x^{t+1}(1-x^{m-t})}{1-x}-\frac{x(1-x^{t})}{1-x}+tx^m+t\right).
\end{split}\]
\end{proof}

\begin{example}
If $m=4$, we have
\[\begin{split}
&f^+(x,0,4)=x(x+1)^2(x^2+1)^2,\\
&f^+(x,1,4)=(x+1)^2(x^2+1)^2,\\
&f^+(x,2,4)=(x+1)(x^2+1)(x^4-x^3+x^2+x+2),\\
&f^+(x,3,4)=(x+1)(x^2+1)(2x^4-x^3-x^2+x+3),\\
&f^+(x,4,4)=(x+1)(x^2+1)(3x^4-x^3-x^2-x+4).
\end{split}\]
Also,
\[\begin{split}
&f^-(x,0,4)=x(x+1)^2(x^2+1)^2,\\
&f^-(x,1,4)=(x+1)(x^2+1)(2x^4+x^3+x^2-x+1),\\
&f^-(x,2,4)=(x+1)(x^2+1)(3x^4+x^3-x^2-x+2),\\
&f^-(x,3,4)=(x+1)(x^2+1)(4x^4-x^3-x^2-x+3),\\
&f^-(x,4,4)=(x+1)(x^2+1)(3x^4-x^3-x^2-x+4).
\end{split}\]
\end{example}


\begin{thebibliography}{3}

\bibitem{Perucca} Perucca, A., De Sousa, J. R. and Tronto, S. (2022) Arithmetic Billiards. \emph{Recreational Mathematics Magazine}, 9(16): 43--54. https://doi.org/10.2478/rmm-2022-0003

\bibitem{Rauch} Rauch, J. (1978). Illumination of bounded domains. \emph{Amer. Math. Monthly}. 85(5): 359--361. DOI: 10.1080/00029890.1978.11994593
\end{thebibliography}
\end{document}